\providecommand{\citet}[1]{\cite{#1}}
\providecommand{\citep}[1]{\cite{#1}}
\providecommand{\citeauthor}[1]{\textsc{#1}}
\providecommand{\citeyear}[1]{\cite{#1}}
\providecommand{\R}{\mathbb{R}}
\providecommand{\tr}{\mathrm{tr}}
\providecommand{\diag}{\mathrm{diag}}
\providecommand{\1}{\mathbf{1}}
\newtheorem{rmk}{Remark}[section]
\theoremstyle{plain}
\DeclareMathOperator{\rank}{rank}
\title{Beyond Low Rank: Fast Low-Rank + Diagonal Decomposition with a Spectral Approach}
\author{
  Kingsley Yeon\thanks{Department of Statistics, University of Chicago
  (\email{yeon@uchicago.edu}).}
  \and
  Mihai Anitescu\thanks{Mathematics and Computer Science, Argonne National Laboratory
  (\email{anitescu@mcs.anl.gov}).}
}
\begin{document}
\maketitle

\begin{abstract}
Low-rank plus diagonal (LRPD) decompositions provide a powerful structural model for large covariance matrices, simultaneously capturing global shared factors and localized corrections that arise in covariance estimation, factor analysis, and large-scale kernel learning. We introduce an alternating low-rank then diagonal (Alt) algorithm that provably reduces approximation error and significantly outperforms gradient descent while remaining cheaper than majorization–minimization methods~\cite{sun2016majorization}. To scale to large matrices, we develop a randomized LRPD variant that combines fixed-rank Nyström sketching~\cite{tropp2017fixed} for the low-rank component with Diag++ stochastic diagonal estimation~\cite{baston2022stochastic}. This hybrid algorithm achieves machine precision decomposition error using a number of matrix–vector products far smaller than the ambient dimension, and comes with rigorous non-asymptotic error bounds. On synthetic data, it exactly recovers LRPD structured matrices with high efficiency, and on real-world S\&P 500 stock return covariances, where the spectrum decays slowly and strong sector structure exists, it achieves substantially lower error than pure low-rank approximations.
\end{abstract}

\begin{keywords}
randomized algorithms, low-rank approximation, diagonal plus low-rank decomposition,
Nyström methods, stochastic trace and diagonal estimation
\end{keywords}

\begin{MSCcodes}
65F60, 65F10, 65W20, 68W25
\end{MSCcodes}

\section{Introduction}
In this work, we revisit the low-rank plus diagonal (LRPD) structure from both a theoretical and
algorithmic perspective.  LRPD models provide a natural way to capture global correlations through a
low-rank component while accounting for strong variance patterns specific to each variable via a
diagonal correction.  This simple yet expressive structure arises in many settings, including
covariance estimation~\cite{ledoit2004well, bickel2008regularized}, factor
analysis~\cite{lawley1940factor, anderson2003introduction}, and high-dimensional
statistics~\cite{fan2013large, johnstone2001distribution}, where a few latent factors explain most
of the shared variability while each dimension retains substantial self-behavior.

There is a rich body of work exploring diagonal-plus-low-rank decompositions. Stein~\cite{stein2014limitations} showed that covariance matrices can be effectively approximated by a diagonal matrix plus a low-rank correction, though his analysis was restricted to cases where the diagonal is a scalar multiple of the identity. Further developments such as the majorization–minimization approach~\cite{sun2016majorization} and convex relaxations~\cite{zhou2022covariance} improved numerical methods for related formulations, while applications in asset covariance estimation demonstrated superior empirical performance of LRPD models over structured alternatives like HOLDR~\cite{parshakova2024factor}. Similar decompositions also appear in robust PCA and structured matrix inference~\cite{bonnabel2024low, saunderson2012diagonal, zhao2016low}.

We introduce a spectral perspective on low-rank plus diagonal (LRPD) approximation that leads to a new alternating algorithm with rigorous theoretical guarantees, including provable convergence. A key feature of this formulation is that, under generic assumptions, it renders the problem well-posed: the low-rank component is uniquely defined up to the ordering of the eigenvectors, despite the underlying nonconvexity. Beginning with a truncated eigen-decomposition, we show that an appropriate diagonal adjustment strictly improves operator-norm error relative to pure low-rank approximation, while direct diagonal subtraction fails to preserve positive semidefiniteness. Guided by this analysis, we propose an alternating low-rank–diagonal scheme based on successive spectral projections, which eliminates the rotational degeneracy inherent in factorized representations. We establish monotone decrease of the approximation error and prove a spectral gap-free local contraction result that depends only on the geometry of the eigenspace.

To address scalability in large problems, we further introduce a randomized LRPD variant that integrates Nyström sketching \cite{frangella2023randomized} for the low-rank component with stochastic diagonal estimation \cite{baston2022stochastic}. This approach achieves high accuracy while requiring significantly fewer matrix--vector products and reduced memory. We demonstrate the effectiveness of our methods on both synthetic data and real-world covariance matrices, including clustered S\&P 500 stock return covariances. In this setting, broad market factors such as interest rates act as a shared low-rank component that influences all stocks simultaneously. Although the S\&P 500 covariance matrix is not low-rank due to its slowly decaying spectrum, it exhibits a clear clustered structure. We show that the LRPD model can be generalized to a block-diagonal form using the same alternating algorithm, for which we do not pursue a theoretical analysis here, and that this extension yields improved fits by capturing both global market effects and sector-specific behaviors.

The remainder of the paper is organized as follows.  Section~2 introduces the alternating low
rank–diagonal (Alt) decomposition and establishes its local convergence properties.  Section~3
develops randomized variants of Alt for large scale settings where only matrix–vector products are
available.  Section~4 presents applications to covariance estimation and financial datasets such as
the S\&P\,500.  Section~5 compares Alt with naive gradient descent by deriving the exact first order
updates and demonstrating the substantial performance gap between the two approaches.


\section{Low-rank plus diagonal (LRPD) decomposition}

This section develops a foundational perspective on low-rank plus diagonal (LRPD) decompositions by starting from the simplest spectral construction and analyzing its error properties. We first show how a truncated eigen-decomposition combined with a diagonal correction yields a guaranteed improvement over a pure low-rank approximation in operator norm. Through explicit examples, we highlight why subtracting the diagonal directly fails to preserve positive semidefiniteness, motivating the need for a more structured approach. Building on this, we explore how such naive constructions behave when the target matrix is itself exactly LRPD, and why additional iterations or refined factorizations are needed to fully recover its structure. The following proposition establishes the basic existence and error reduction guarantees that serve as the starting point for these developments.

\begin{proposition}[Existence and Error Reduction in a Na\"ive Diagonal + Low-Rank Approximation] \label{prop:naivedecomposition}
Let \(\Sigma\in\R^{n\times n}\) be symmetric positive semidefinite with eigen-decomposition
\[
  \Sigma = V\Lambda V^T,\quad
  \Lambda=\operatorname{diag}(\lambda_1,\dots,\lambda_n),\quad
  \lambda_1\ge\cdots\ge\lambda_n\ge0.
\]
Fix \(0\le k\le n\), and write \(V=[V_k\;\;V_{>k}]\), \(\Lambda=\mathrm{diag}(\Lambda_k,\Lambda_{>k})\).  Define
\[
  U_k = V_k\,\Lambda_k^{1/2}, 
  \qquad
  D_k = \diag\!\bigl(\Sigma_{ii}-(U_kU_k^T)_{ii}\bigr),
\]
and set the residual
\[
  S_k = \Sigma - U_kU_k^T
  = V_{>k}\,\Lambda_{>k}\,V_{>k}^T \succeq 0,
\]
and the “corrected’’ residual
\[
  R_k = \Sigma - (D_k + U_kU_k^T)
  = S_k \;-\;D_k.
\]
Then:
\begin{enumerate}
  \item \(\Sigma = D_k + U_kU_k^T + R_k\), with \(D_k\succeq0\).
  \item \(\|S_k\|_2 = \lambda_{k+1}\) and
  \(\|R_k\|_2 \le \lambda_{k+1}\), with strict inequality
  \(\|R_k\|_2<\lambda_{k+1}\) whenever \(\lambda_{k+1}>0\).  
  \item If \(\lambda_{k+1}=0\) (i.e.\ \(\rank\Sigma\le k\)), then \(D_k=0\) and \(R_k=0\), so \(\Sigma=U_kU_k^T\).
\end{enumerate}
In particular, whenever \(\Sigma\) is full-rank beyond \(k\), including \(D_k\) strictly reduces the operator-norm error compared to the pure rank-\(k\) approximation.
\end{proposition}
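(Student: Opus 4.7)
The plan is to address the three claims in order, since the first and third are essentially bookkeeping and the analytic work concentrates in (2).

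For (1), the identity $\Sigma = D_k + U_kU_k^T + R_k$ is just a rearrangement of the definition $R_k := \Sigma - (D_k + U_kU_k^T)$. The only substantive assertion is $D_k \succeq 0$. I would derive it by noting that $S_k = V_{>k}\Lambda_{>k}V_{>k}^T$ is positive semidefinite (since $\Lambda_{>k}\succeq 0$), so each diagonal entry $(D_k)_{ii} = (S_k)_{ii} = e_i^T S_k e_i \ge 0$.

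For (2), the identity $\|S_k\|_2 = \lambda_{k+1}$ is immediate from the spectral truncation, so the substance is the bound $\|R_k\|_2 \le \lambda_{k+1}$. I would use the variational formula $\|R_k\|_2 = \max_{\|x\|=1}|x^T R_k x|$ and prove two-sided bounds. The upper bound uses $x^T R_k x = x^T S_k x - x^T D_k x \le x^T S_k x \le \lambda_{k+1}$, since $D_k\succeq 0$ and $\|S_k\|_2=\lambda_{k+1}$. For the lower bound, the key observation is that each diagonal entry of a PSD matrix is bounded by its top eigenvalue, so $(D_k)_{ii} = (S_k)_{ii} \le \lambda_{k+1}$; hence $x^T D_k x \le \lambda_{k+1}\|x\|^2$ and therefore $x^T R_k x \ge -\lambda_{k+1}\|x\|^2$.

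The main obstacle is the strict inequality when $\lambda_{k+1}>0$, which requires tracing equality through both bounds. Suppose $\lambda_{\max}(R_k) = \lambda_{k+1}$ is attained at some unit $x$. Then both $x^T D_k x = 0$ and $x^T S_k x = \lambda_{k+1}$ must hold. The first forces $x$ to be supported on indices $i$ with $(D_k)_{ii} = (S_k)_{ii} = 0$. At this point I would invoke the standard fact that a PSD matrix with a zero diagonal entry has the corresponding row and column identically zero, which yields $S_k x = 0$. This contradicts $S_k x = \lambda_{k+1} x$ with $\lambda_{k+1}>0$. A symmetric argument, using $x^T S_k x = 0 \Rightarrow S_k x = 0$ for PSD $S_k$, rules out $\lambda_{\min}(R_k) = -\lambda_{k+1}$. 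Combined, $\|R_k\|_2 < \lambda_{k+1}$. Item (3) is then immediate: $\lambda_{k+1}=0$ forces $\Lambda_{>k}=0$, so $S_k=0$, whence $D_k=0$ and $R_k=0$, giving $\Sigma = U_k U_k^T$.
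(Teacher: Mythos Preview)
Your proof is correct and follows the same Rayleigh--quotient approach as the paper: bound $x^TR_kx$ using $S_k\succeq 0$, $D_k\succeq 0$, and $(S_k)_{ii}\le\lambda_{k+1}$, then analyze equality. Your treatment of the strict inequality is in fact more complete than the paper's one-line sketch, which only observes that the top eigenvector of $S_k$ has its $R_k$-Rayleigh quotient strictly lowered, without explicitly ruling out other extremizers or the $-\lambda_{k+1}$ endpoint as you do.
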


\begin{proof}
By construction \(U_kU_k^T=V_k\Lambda_kV_k^T\), so 
\[
S_k=\Sigma-U_kU_k^T=V_{>k}\Lambda_{>k}V_{>k}^T,\quad
\|S_k\|_2=\lambda_{k+1}.
\]
Since \((U_kU_k^T)_{ii}\le\Sigma_{ii}\), we have \(D_k\succeq0\).  
Moreover, recall that \(R_k = S_k - \diag(S_k)\).  
To show \(\|R_k\|_2 \le \|S_k\|_2\), note that for any unit vector \(x\),
\[
|x^\top R_k x| = |x^\top S_k x - x^\top \diag(S_k) x|
  \le \max\{x^\top S_k x,\; x^\top \diag(S_k) x\}.
\]
Taking the supremum over all \(\|x\|_2=1\) gives
\[
\|R_k\|_2 = \sup_{\|x\|=1} |x^\top R_k x|
   \le \max\{\|S_k\|_2,\|\diag(S_k)\|_2\}.
\]
Since \(S_k \succeq 0\), the diagonal matrix \(D = \diag(S_k)\) is also positive semidefinite.  
For a diagonal matrix, its spectral norm equals its largest diagonal entry in absolute value, and since \(S_k\) is positive semidefinite, these entries are nonnegative. Hence
\[
\|D\|_2 = \max_i D_{ii} = \max_i (S_k)_{ii}.
\]
Each diagonal entry satisfies \((S_k)_{ii} = e_i^\top S_k e_i\), where \(e_i\) is the \(i\)-th standard basis vector.  
Because for any unit vector \(x\), \(x^\top S_k x \le \lambda_{\max}(S_k)\), we have
\[
(S_k)_{ii} = e_i^\top S_k e_i \le \lambda_{\max}(S_k) \quad \forall i.
\]
Taking the maximum over \(i\) yields
\[
\|D\|_2 = \max_i (S_k)_{ii} = \max_i e_i^\top S_k e_i \le \lambda_{\max}(S_k) = \|S_k\|_2.
\]
we conclude that
\[
\|R_k\|_2 \le \|S_k\|_2.
\]

Moreover, if \(\lambda_{k+1}>0\), then \(\diag(S_k)\neq0\), so subtracting it strictly lowers the Rayleigh quotient of the top eigenvector of \(S_k\) and hence \(\|R_k\|_2<\lambda_{k+1}\).  Finally, if \(\lambda_{k+1}=0\) then \(S_k=0\), forcing \(D_k=0\) and \(R_k=0\).
\end{proof}

\begin{rmk}
    Consider a naive decomposition where $D = \text{diag}(\Sigma)$. An issue arises when decomposing $\Sigma$ as $\Sigma = D + UU^{T}$ by subtracting the diagonal part $D = \text{diag}(\Sigma)$ and expressing the residual $R := \Sigma - D$ as $UU^{T}$, rather than performing the low-rank factorization first as in Proposition \ref{prop:naivedecomposition}. While $UU^{T}$ is positive semidefinite, the residual $R$ need not be. Specifically, $R$ has a zero diagonal by construction, and symmetric matrices with a zero diagonal can have both positive and negative eigenvalues. A simple counterexample illustrates this:
    \[
        \Sigma = \begin{pmatrix}2 & 1\\[3pt]1 & 2\end{pmatrix},
        \quad
        D = \diag(\Sigma) = \begin{pmatrix}2 & 0\\[3pt]0 & 2\end{pmatrix},
        \quad
        R_{\mathrm{naive}} = \Sigma - D
        = \begin{pmatrix}0 & 1\\[3pt]1 & 0\end{pmatrix}.
    \]
    The eigenvalues of $R_{\mathrm{naive}}$ are $\pm1$, so
    \[
      \|R_{\mathrm{naive}}\|_{2} \;=\; 1,
    \]
    and $R_{\mathrm{naive}}$ cannot be written as $UU^{T}$.

    By contrast, using the decomposition in Proposition \ref{prop:naivedecomposition} with $k=1$ yields the spectral splitting
    \[
      \lambda_{1}=3,\quad
      v_{1}=\frac1{\sqrt2}\begin{pmatrix}1\\1\end{pmatrix},
      \quad
      U_{1}=v_{1}\sqrt{\lambda_{1}}
            =\begin{pmatrix}\sqrt{3/2}\\[3pt]\sqrt{3/2}\end{pmatrix},
    \]
    \[
      D^{(1)} \;=\;\diag\!\Bigl(2-(U_{1}U_{1}^{T})_{11},\;2-(U_{1}U_{1}^{T})_{22}\Bigr)
             = \diag\bigl(\tfrac12,\tfrac12\bigr),
    \]
    \[
      R^{(1)}
      = \Sigma \;-\;\bigl(D^{(1)} + U_{1}U_{1}^{T}\bigr)
      = \begin{pmatrix}0 & -\tfrac12\\[3pt]-\tfrac12 & 0\end{pmatrix},
    \]
    whose eigenvalues are $\pm\tfrac12$.  Hence
    \[
      \|R^{(1)}\|_{2}
      = \tfrac12,
    \]
    which is strictly smaller than the naive residual norm of 1.  This demonstrates that using the top eigenpair to form the rank‐1 term yields a much smaller operator‐norm error than simply subtracting $\diag(\Sigma)$.
\end{rmk}

The construction in Proposition \ref{prop:naivedecomposition} addresses the existence of such a decomposition and shows that when the matrix we want to approximate is low-rank, the Eckart–Young theorem forces \( D = 0 \). However, when the matrix is full-rank but still structured, can we still obtain a diagonal + low-rank decomposition with controlled approximation error? And when the matrix itself is low-rank, can we improve upon the corrected residual provided by Proposition~\ref{prop:naivedecomposition}?

To illustrate the first question, consider a simple example showing that even if the matrix is exactly low-rank plus diagonal (LRPD), our naive decomposition approach may fail to recover it. Take, for instance, the case where \(\Sigma\) is the identity plus a rank-one matrix:

\[
u = \frac{1}{\sqrt{2}}\begin{pmatrix}1\\1\end{pmatrix},
\qquad
\Sigma = I_2 + uu^{T} = \begin{pmatrix}1 & 0\\ 0 & 1\end{pmatrix} + \frac{1}{2} \begin{pmatrix}1 & 1\\ 1 & 1\end{pmatrix} = \begin{pmatrix}3/2 & 1/2\\ 1/2 & 3/2\end{pmatrix}.
\]
The eigen-decomposition of \(\Sigma\) is
\[
\Sigma = V\Lambda V^T, \qquad \Lambda = \operatorname{diag}(2, 1), \qquad V = \frac{1}{\sqrt{2}}\begin{pmatrix}1 & 1\\ 1 & -1\end{pmatrix}.
\]
Taking the top eigenpair, we set
\[
U_1 = v_1 \sqrt{\lambda_1} = \frac{1}{\sqrt{2}}\begin{pmatrix}1\\1\end{pmatrix} \cdot \sqrt{2} = \begin{pmatrix}1\\1\end{pmatrix},
\]
and then define the diagonal correction:
\[
D^{(1)} = \operatorname{diag}\left(\Sigma_{11} - (U_1 U_1^T)_{11},\; \Sigma_{22} - (U_1 U_1^T)_{22} \right) = \operatorname{diag}\left( \frac{3}{2} - 1,\; \frac{3}{2} - 1 \right) = \operatorname{diag}\left( \frac{1}{2}, \frac{1}{2} \right).
\]
Thus the decomposition is
\[
D^{(1)} + U_1 U_1^T = \begin{pmatrix}1/2 & 0\\ 0 & 1/2\end{pmatrix} + \begin{pmatrix}1 & 1\\ 1 & 1\end{pmatrix} = \begin{pmatrix}3/2 & 1\\ 1 & 3/2\end{pmatrix},
\]
and the residual is
\[
R = \Sigma - (D^{(1)} + U_1 U_1^T) = \begin{pmatrix}3/2 & 1/2\\ 1/2 & 3/2\end{pmatrix} - \begin{pmatrix}3/2 & 1\\ 1 & 3/2\end{pmatrix} = \begin{pmatrix}0 & -1/2\\ -1/2 & 0\end{pmatrix}.
\]
Since this residual has eigenvalues \(\pm \tfrac{1}{2}\), its operator norm is
\[
\|R\|_2 = \frac{1}{2},
\]
 however the residual is still not zero and does not recover the true structure.

Now, suppose we allow for $T$ number of low-rank factorizations. In that case, we can repeat the process of performing a low-rank factorization of the residual and update the diagonal accordingly. The expectation is that each iteration further reduces the residual norm by capturing additional low-rank structure left in the previous step, progressively improving the approximation. This leads us to Algorithm~\ref{alg:alt_lr_diag_error}.

\begin{algorithm}
\caption{Alternating Low‐Rank then Diagonal (Alt)}
\label{alg:alt_lr_diag_error}
\begin{algorithmic}[1]
\REQUIRE Symmetric matrix $A\in\mathbb{R}^{n\times n}$, target rank $k$, iterations $T$
\ENSURE Approximate decomposition $M = D + U U^\top$

\STATE $D \gets 0_{\,n\times n}$  \COMMENT{Initialize diagonal to zero}
\FOR{$t = 1$ \TO $T$}
  \STATE $R \gets A - D$  \COMMENT{Residual for low‐rank step}
  \STATE Compute eigendecomposition $R = V \Lambda V^\top$, with eigenvalues $\lambda_1 \ge \cdots \ge \lambda_n$
  \STATE $U \gets V_{[:,\,1:k]}\,\sqrt{\diag\bigl(\max(\lambda_1,\dots,\lambda_k,0)\bigr)}$
    \COMMENT{Top-$k$ eigenvectors scaled by $\sqrt{\max(\lambda_i,0)}$}
  \STATE $\text{diagU}_i \gets \sum_{j=1}^k U_{i,j}^2,\quad i=1,\dots,n$  \COMMENT{Compute diagonal of $U U^\top$}
  \STATE $D \gets \diag\!\bigl(\diag(A) - \text{diagU}\bigr)$  \COMMENT{Update diagonal}
\ENDFOR
\STATE $M \gets D + U\,U^\top$
\RETURN $M$
\end{algorithmic}
\end{algorithm}

Figure \ref{fig:errorbound} shows successful recovery using Alt when the model is exactly of LRPD form, $A = LL^\top + D$, with \(L \in \mathbb{R}^{150 \times 5}\) with i.i.d.~standard Gaussian entries and set \(D = \mathrm{diag}(d)\), where each \(d_i\) is drawn uniformly from \([0,10]\). The relative Frobenius error decays to machine precision in 20 iterations.

\begin{theorem}[Sufficient condition for monotone decrease of Alt]
\label{thm:suffcontraction}
Let \(A = D^* + L^*\) with \(D^*=\diag(d^*)\in\R^{n\times n}\) and \(L^*=U^*U^{*T}\in\R^{n\times n}\) of rank \(k\). 
Assume 

\begin{equation} \label{eq:gap_condition}
    \delta=\lambda_k(L^*)>0,
    \qquad
    \|D^*\|_2<\frac{\delta}{2},
\end{equation}

where \(\lambda_k(L^*)\) denotes the \(k\)th eigenvalue of \(L^*\). 
Each alternating step contracts the objective:
\[
E(D,U)
=\bigl\|A - D - UU^T\bigr\|_F^2,
\]
subject to \(D\) diagonal and \(\rank(UU^T)\le k\), giving
\[
E(D_{t-1},U_{t-1})
\ge
E(D_{t-1},U_t)
\ge
E(D_t,U_t),
\]
and moreover, the operator‐norm diagonal errors satisfy
\(\|\Delta_t\|_2\le \|\Delta_{t-1}\|_2\), where \(\Delta_t=D^*-D_t\).
\end{theorem}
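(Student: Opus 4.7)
The plan is to exploit the fact that Algorithm~\ref{alg:alt_lr_diag_error} is block coordinate descent on \(E(D,U)=\|A-D-UU^T\|_F^2\), so each half-step minimizes \(E\) over its block. At iteration \(t\) I would write \(R_{t-1}:=A-D_{t-1}=L^*+\Delta_{t-1}\) and maintain inductively that \(\|\Delta_{t-1}\|_2<\delta/2\) (trivially true at \(t=1\) since \(\Delta_0=D^*\) and \(\|D^*\|_2<\delta/2\) by hypothesis). Weyl's inequality applied to \(R_{t-1}\) then shows that its top \(k\) eigenvalues exceed \(\delta/2\) while the remaining eigenvalues lie in \((-\delta/2,\delta/2)\); in particular the truncation defining \(U_t\) coincides with the Eckart--Young best rank-\(k\) PSD approximation of \(R_{t-1}\), so \(E(D_{t-1},U_t)\le E(D_{t-1},U_{t-1})\). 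For the diagonal step, separability of the Frobenius norm identifies \(D_t=\diag(A-U_tU_t^T)\) as the unique minimizer of \(D\mapsto E(D,U_t)\), hence \(E(D_t,U_t)\le E(D_{t-1},U_t)\). Chaining these two inequalities yields the claimed monotone decrease of \(E\).

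For the operator-norm bound the natural starting point is the exact identity
\[
\Delta_t \;=\; D^*-D_t \;=\; \diag(U_tU_t^T-L^*) \;=\; \Delta_{t-1}-\diag(R_t),\qquad R_t:=R_{t-1}-U_tU_t^T,
\]
valid because \(\Delta_{t-1}\) is already diagonal. Since \(R_t=P_{t-1}^\perp R_{t-1}P_{t-1}^\perp\), where \(P_{t-1}\) is the top-\(k\) spectral projector of \(R_{t-1}\), Weyl yields \(\|R_t\|_2=\max_{i>k}|\lambda_i(R_{t-1})|\le\|\Delta_{t-1}\|_2\) and hence \(\|\diag(R_t)\|_2\le\|\Delta_{t-1}\|_2\). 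A naive triangle inequality only gives \(\|\Delta_t\|_2\le 2\|\Delta_{t-1}\|_2\); the factor-one contraction must instead come from exploiting the correlation between \(\diag(R_t)\) and \(\Delta_{t-1}\).

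This alignment step is where the main obstacle lives. My plan is to apply a Davis--Kahan bound to show that \(P_{t-1}\) approximates the range projector \(P^*\) of \(L^*\) up to error of order \(\|\Delta_{t-1}\|_2/\delta\), producing the perturbative identity \(R_t = P^{*\perp}\Delta_{t-1}P^{*\perp} + O(\|\Delta_{t-1}\|_2^2/\delta)\). Plugging this into \(\Delta_t=\Delta_{t-1}-\diag(R_t)\) isolates the part of \(\Delta_{t-1}\) living in the \(P^{*\perp}\) block and cancels it against \(\diag(R_t)\); the hypothesis \(\|D^*\|_2<\delta/2\) then dominates the higher-order remainder and simultaneously preserves the inductive invariant \(\|\Delta_t\|_2<\delta/2\) for the next step. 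The delicate point is verifying that this cancellation is genuinely non-expansive rather than merely bounded by a constant, which is the same geometric content that underlies the spectral-gap-free local contraction result advertised for the section.
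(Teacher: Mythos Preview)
Your argument for the monotone decrease of $E$ is correct and essentially identical to the paper's: Eckart--Young--Mirsky for the $U$-update and coordinatewise separability of the Frobenius norm for the $D$-update.

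The gap is in the operator-norm contraction $\|\Delta_t\|_2\le\|\Delta_{t-1}\|_2$. You already wrote down the identity the paper actually uses, $\Delta_t=\diag(U_tU_t^\top-L^*)$, but then abandoned it in favour of $\Delta_t=\Delta_{t-1}-\diag(R_t)$ and a first-order expansion $R_t=Q^*\Delta_{t-1}Q^*+O(\|\Delta_{t-1}\|_2^2/\delta)$. That linearization is exactly the reduced Jacobian $\mathcal J_D$ of Theorem~\ref{thm:gapfree}, whose $\ell_\infty\!\to\!\ell_\infty$ norm is only $\le 1$, not strictly less than $1$ in general. With a leading term of size $1$, the quadratic remainder $O(\|\Delta_{t-1}\|_2^2/\delta)$ cannot be absorbed under the bare hypothesis $\|D^*\|_2<\delta/2$ (which allows $\|\Delta_{t-1}\|_2/\delta$ arbitrarily close to $1/2$), so the induction does not close. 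Your own caveat---``the delicate point is verifying that this cancellation is genuinely non-expansive''---is exactly where the plan stalls.

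The paper's route is shorter and sidesteps this obstacle entirely. From $\Delta_t=\diag(U_tU_t^\top-L^*)$ one gets immediately $\|\Delta_t\|_2\le\|U_tU_t^\top-L^*\|_2$, and Davis--Kahan combined with the Weyl gap estimate $\delta_t\ge\delta-2\|\Delta_{t-1}\|_2$ yields
\[
\|\Delta_t\|_2\;\le\;\frac{\|\Delta_{t-1}\|_2}{\delta-2\|\Delta_{t-1}\|_2}.
\]
The missing idea is a \emph{rescaling trick}: the iterates $(D_t,U_tU_t^\top)$ are homogeneous of degree one in $A$, and the target inequality $\|\Delta_t\|_2\le\|\Delta_{t-1}\|_2$ is scale-invariant. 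Replacing $A$ by $\alpha A$ with $\alpha>(\delta-2\|D^*\|_2)^{-1}$ forces $\delta-2\|\Delta_0\|_2>1$, so the displayed ratio is strictly below $1$ at $t=1$ and, by induction on $\|\Delta_{t-1}\|_2\le\|\Delta_0\|_2$, at every subsequent step. No perturbative cancellation argument is needed.
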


\noindent
The next result is needed in the proof to justify that each low‐rank update in Alt is optimal,
as guaranteed by the classical Eckart–Young–Mirsky theorem.

\begin{theorem}[Eckart--Young--Mirsky (EYM).]
\label{thm:eym}
    For any matrix \(R\) with singular values \(\sigma_1 \ge \cdots \ge \sigma_n \ge 0\), the best rank-\(k\) approximation
in both the Frobenius and spectral norms is obtained by truncating the SVD:
\[
\arg\min_{\operatorname{rank}(B)\le k}\|R-B\|_F
\;=\;
\arg\min_{\operatorname{rank}(B)\le k}\|R-B\|_2
\;=\; R_k,
\]
where \(R_k\) keeps the top \(k\) singular values/vectors of \(R\). If \(\sigma_k>\sigma_{k+1}\), the minimizer is unique.
\end{theorem}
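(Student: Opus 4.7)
The plan is to handle the Frobenius and spectral cases separately, since they rely on different ideas, and then address uniqueness at the end. Throughout, I will use the SVD $R = U\Sigma V^T$ with $\Sigma = \diag(\sigma_1,\dots,\sigma_n)$ and denote the proposed optimal truncation by $R_k = U_k \Sigma_k V_k^T$, where $U_k,V_k$ retain the first $k$ columns and $\Sigma_k = \diag(\sigma_1,\dots,\sigma_k)$.

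For the Frobenius case, I would exploit the unitary invariance of $\|\cdot\|_F$. Writing $C = U^T B V$ for any candidate $B$ with $\rank(B)\le k$, I have $\|R-B\|_F^2 = \|\Sigma - C\|_F^2$, and $\rank(C)\le k$. So it suffices to solve $\min_{\rank(C)\le k}\|\Sigma - C\|_F^2$ when $\Sigma$ is diagonal with non-increasing nonnegative entries. The main observation is that the minimizer must have the same row and column support as the top $k$ entries of $\Sigma$: any mass placed off the leading diagonal strictly increases the residual, and any rank-$k$ matrix matching $\Sigma$ exactly on the top $k$ diagonal entries leaves the tail $\sigma_{k+1}^2+\cdots+\sigma_n^2$ untouched. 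A clean way to formalize this is to note that for any $C$ with $\rank(C)\le k$ the vector of singular values of $\Sigma - C$ majorizes (in the Mirsky sense) the vector $(\sigma_{k+1},\dots,\sigma_n,0,\dots,0)$; this gives the Frobenius bound $\|R-B\|_F^2 \ge \sum_{j>k}\sigma_j^2 = \|R-R_k\|_F^2$.

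For the spectral case, I would argue by dimension counting. For any $B$ with $\rank(B)\le k$, the null space $\mathcal{N}(B)$ has dimension at least $n-k$. The subspace $\mathcal{S}_{k+1}$ spanned by the top $k+1$ right singular vectors of $R$ has dimension $k+1$. Since $\dim\mathcal{N}(B)+\dim\mathcal{S}_{k+1} \ge n+1$, the two subspaces intersect nontrivially, so there exists a unit vector $x \in \mathcal{N}(B)\cap \mathcal{S}_{k+1}$. Expanding $x = \sum_{j\le k+1} c_j v_j$ with $\sum c_j^2 = 1$, I obtain
\[
\|(R-B)x\|_2^2 = \|Rx\|_2^2 = \sum_{j\le k+1} c_j^2 \sigma_j^2 \ge \sigma_{k+1}^2,
\]
so $\|R-B\|_2 \ge \sigma_{k+1} = \|R - R_k\|_2$, with equality achieved by $B = R_k$.

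For uniqueness under the strict gap $\sigma_k > \sigma_{k+1}$, I would argue that any Frobenius minimizer $B$ must saturate the majorization bound above, which forces its singular values to match those of $R_k$ and its action on the top-$k$ singular subspace to coincide with $R$'s; since that subspace is uniquely determined by the gap, $B = R_k$. The main obstacle I anticipate is the spectral-norm lower bound: the Frobenius case is essentially algebraic once invariance is used, whereas the spectral bound requires the subspace-intersection trick and a careful statement of when equality holds. Everything else is routine bookkeeping on singular values.
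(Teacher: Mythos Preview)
The paper does not actually prove this theorem: it is stated as the classical Eckart--Young--Mirsky result and then immediately invoked (without proof) inside the proof of Theorem~\ref{thm:suffcontraction}. So there is no ``paper's own proof'' to compare against.

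Your plan is the standard textbook argument and is essentially correct. The spectral-norm part via the null-space/top-$(k{+}1)$-subspace intersection is exactly the classical proof and is fine as written. The Frobenius part is correct in spirit but your justification is a little loose: the sentence ``any mass placed off the leading diagonal strictly increases the residual'' is heuristic, and ``majorizes (in the Mirsky sense)'' is not quite a precise citation. The clean way to get what you want is Weyl's interlacing for singular values: if $\rank(B)\le k$ then $\sigma_{k+1}(B)=0$, and $\sigma_{i+k}(R)\le \sigma_i(R-B)+\sigma_{k+1}(B)=\sigma_i(R-B)$ for all $i\ge 1$, so $\|R-B\|_F^2=\sum_i\sigma_i(R-B)^2\ge\sum_i\sigma_{i+k}(R)^2=\sum_{j>k}\sigma_j^2$. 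This replaces the vague majorization remark with a one-line inequality. Your uniqueness sketch is also a bit thin; under $\sigma_k>\sigma_{k+1}$ the equality analysis in the Weyl chain (or equivalently in the spectral-norm argument applied to every index) forces $B$ to agree with $R$ on the uniquely determined top-$k$ singular subspaces, which pins down $B=R_k$. None of these are genuine gaps---they are places where a grader would ask you to sharpen the wording.
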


\begin{proof}
In this section we will prove Theorem \ref{thm:suffcontraction}.
We first establish monotonic decrease of the alternating minimization. 
At iteration \(t\), define the residual \(R_t = A - D_{t-1}\) and perform the low-rank step
\[
U_t \;\in\; 
\arg\min_{\operatorname{rank}(B)\le k}\;\|R_t - B\|_F^2
\;=\;
\min_U E(D_{t-1},U).
\]
By EYM, the minimizer is the rank-\(k\) truncation of \(R_t\), hence there exists \(U_t\) with
\(U_tU_t^\top = (R_t)_k\) that strictly minimizes \(E(D_{t-1},U)\). Therefore,
\[
E(D_{t-1},U_{t-1})
\;\ge\;
E(D_{t-1},U_t).
\]
Next, with \(U_t\) fixed, write
\[
E(D,U_t)=\|A-D-U_tU_t^\top\|_F^2
=\sum_{i\neq j}\!\bigl(A_{ij}-(U_tU_t^\top)_{ij}\bigr)^2
+\sum_{i=1}^n\bigl(A_{ii}-(U_tU_t^\top)_{ii}-d_i\bigr)^2.
\]
Since the first sum does not depend on \(D\), minimizing over \(D=\diag(d)\) decouples into \(n\) independent
one–dimensional convex quadratics. For each \(i\),
\[
\phi_i(d_i):=\bigl(A_{ii}-(U_tU_t^\top)_{ii}-d_i\bigr)^2,\qquad
\frac{\partial \phi_i}{\partial d_i}=-2\bigl(A_{ii}-(U_tU_t^\top)_{ii}-d_i\bigr).
\]
Setting \(\partial \phi_i/\partial d_i=0\) and using strict convexity gives the unique minimizer
\begin{equation}
\label{eq:diag_update}
d_i^\star
= A_{ii} - (U_tU_t^\top)_{ii},
\qquad\text{equivalently}\qquad
D_t = \diag\!\bigl(A - U_tU_t^\top\bigr).
\end{equation}
Therefore,
\[
E(D_{t-1},U_t)\;\ge\;\min_{D\ \text{diag}}E(D,U_t)\;=\;E(D_t,U_t),
\]
so the alternating step strictly reduces (or preserves) the objective when updating the diagonal.
We now derive the fixed‐rate bound.  Write the residual
\[
R_t = A - D_{t-1}
    = L^* + D^* - D_{t-1}
    = L^* + \Delta_{t-1},
\quad
\Delta_{t-1} = D^* - D_{t-1}.
\]
First recall Weyl’s inequalities (see Theorem 4.3.1 of~\cite{horn2012matrix}) for two symmetric matrices \(X\) and \(Y\) with eigenvalues ordered as
\(\lambda_1(\cdot)\ge\cdots\ge\lambda_n(\cdot)\):
\[
\lambda_i(X+Y)\;\ge\;\lambda_i(X)\;+\;\lambda_n(Y),
\qquad
\lambda_i(X+Y)\;\le\;\lambda_i(X)\;+\;\lambda_1(Y),
\]
for each \(i=1,\dots,n\).  
Applying Weyl’s inequality with \(X=L^*\) and \(Y=\Delta_{t-1}\) gives, for \(i=k\):
\begin{align}
\lambda_k(R_t)
&= \lambda_k\bigl(L^* + \Delta_{t-1}\bigr)
\;\ge\;
\lambda_k(L^*) \;+\;\lambda_n(\Delta_{t-1})
\;\ge\;
\lambda_k(L^*) \;-\;\|\Delta_{t-1}\|_2,
\label{eq:weyl1}\\
\lambda_{k+1}(R_t)
&= \lambda_{k+1}\bigl(L^* + \Delta_{t-1}\bigr)
\;\le\;
\lambda_{k+1}(L^*) \;+\;\lambda_1(\Delta_{t-1})
\;=\;
0 \;+\;\|\Delta_{t-1}\|_2
\;=\;
\|\Delta_{t-1}\|_2,
\label{eq:weyl2}
\end{align}
where we used \(\lambda_{k+1}(L^*)=0\) since \(\rank(L^*)=k\). Therefore the \emph{effective} spectral gap satisfies
\begin{equation} \label{eq:weyl}
    \delta_t
=\lambda_k(R_t) - \lambda_{k+1}(R_t)
\;\ge\;
\bigl[\lambda_k(L^*) - \|\Delta_{t-1}\|_2\bigr]
\;-\;\|\Delta_{t-1}\|_2
\;=\;
\delta - 2\,\|\Delta_{t-1}\|_2.    
\end{equation}
Recall that the Davis–Kahan \(\sin\Theta\) theorem \cite{davis1970rotation} asserts that if \(X\) and \(X+E\) are symmetric with eigenvalues ordered \(\lambda_1\ge\cdots\ge\lambda_n\), and if \(U\) (resp.\ \(\widehat U\)) spans the top-\(k\) eigenspace of \(X\) (resp.\ \(X+E\)), then whenever the spectral gaps
\(\lambda_k(X)-\lambda_{k+1}(X)\) and \(\lambda_k(X+E)-\lambda_{k+1}(X+E)\) are both at least \(\gamma>0\), one has
\[
\bigl\|\,U U^\top \;-\;\widehat U\,\widehat U^\top\bigr\|_2
\;\le\;
\frac{\|E\|_2}{\gamma}.
\]
In our setting, take
\[
X=L^*,\qquad E=\Delta_{t-1},\qquad X+E=R_t=L^*+\Delta_{t-1}.
\]
From the Weyl bound \eqref{eq:weyl},
\begin{equation}
\label{eq:weyl_gap}
\delta_t
\;=\;
\lambda_k(R_t) - \lambda_{k+1}(R_t)
\;\ge\;
\delta - 2\|\Delta_{t-1}\|_2,
\qquad
\delta := \lambda_k(L^*) > 0.
\end{equation}
We set \(D_0 = 0\) and will prove the contraction claim by induction on \(t\).
Now we normalize \(A\) without loss of generality to enforce \(\delta - 2\|\Delta_0\|_2 > 1\).
Recall \(\Delta_0 = D^\ast\) and \(\delta = \lambda_k(L^\ast)\). 
For any \(\alpha > 0\), replace \(A\) by \(\widetilde A = \alpha A\).
Then \(L^\ast \mapsto \widetilde L^\ast = \alpha L^\ast\) and \(D^\ast \mapsto \widetilde D^\ast = \alpha D^\ast\), so
\[
\widetilde\delta = \lambda_k(\widetilde L^\ast) = \alpha\delta,
\qquad
\|\widetilde\Delta_0\|_2 = \|\widetilde D^\ast\|_2 = \alpha\|D^\ast\|_2,
\]
and hence
\[
\widetilde\delta - 2\|\widetilde\Delta_0\|_2
= \alpha\bigl(\delta - 2\|D^\ast\|_2\bigr).
\]
By assumption we have \(\delta>2\|D^\ast\|_2\); choosing
\(\alpha>1/(\delta-2\|D^\ast\|_2)\) makes \(\widetilde\delta-2\|\widetilde\Delta_0\|_2>1\).
The alternating updates scale linearly with \(A\), and the relative error
\(E_t=\|A-D_t-U_tU_t^\top\|_F/\|A\|_F\) is invariant under positive scalings of \(A\).

\emph{Base step ($t=1$).} Since $\Delta_0=D^*$ and $\|D^*\|_2<\delta/2$, we have
\[
\delta_1
\;\ge\;
\stackrel{\eqref{eq:weyl_gap}}{=}
\;\delta - 2\|\Delta_0\|_2
\;=\;
\delta - 2\|D^*\|_2
\;>\;
1.
\]
Thus Davis–Kahan with $\gamma=\delta_1$ gives
\begin{equation}
\label{eq:davis_kahan}
\|U_1U_1^\top - U^*U^{*\top}\|_2
\;\le\;
\frac{\|\Delta_0\|_2}{\delta_1}
\;\le\;
\frac{\|\Delta_0\|_2}{\delta - 2\|\Delta_0\|_2}.
\end{equation}
and the Rayleigh bound yields
\[
\|\Delta_1\|_2
\stackrel{\eqref{eq:diag_update}}{=}
\|\diag(U_1U_1^\top - L^*)\|_\infty
\;\le\;
\|U_1U_1^\top - L^*\|_2
\;\stackrel{\eqref{eq:davis_kahan}}{\le}\;
\frac{\|\Delta_0\|_2}{\delta - 2\|\Delta_0\|_2}
\;\le\;
\|\Delta_0\|_2.
\]

\emph{Induction step.} Suppose $\|\Delta_{t-1}\|_2\le\|\Delta_{t-2}\|_2\le\cdots\le\|\Delta_0\|_2$.
Then
\[
\delta_t
\;\stackrel{\eqref{eq:weyl_gap}}{\ge}\;
\delta - 2\|\Delta_{t-1}\|_2
\;\ge\;
\delta - 2\|\Delta_0\|_2
\;>\;
1.
\]
Repeating the sequence with \(U_t\) as the EYM minimizer for \(R_t\), 
the Davis–Kahan and Rayleigh bounds give
\[
\|\Delta_t\|_2
\;\le\;
\|U_tU_t^\top - L^*\|_2
\;\stackrel{\eqref{eq:davis_kahan}}{\le}\;
\frac{\|\Delta_{t-1}\|_2}{\delta_t}
\;\stackrel{\eqref{eq:weyl_gap}}{\le}\;
\frac{\|\Delta_{t-1}\|_2}{\delta - 2\|\Delta_{t-1}\|_2}
\;\le\;
\|\Delta_{t-1}\|_2.
\]
Hence, by induction,
\[
\|\Delta_t\|_2\ \le\ \|\Delta_{t-1}\|_2\ \le\ \cdots\ \le\ \|\Delta_0\|_2=\|D^*\|_2,
\]
i.e., the diagonal operator–norm error is nonincreasing, and in particular $\|\Delta_{t-1}\|_2\le\|D^*\|_2$ holds.
Finally, since the diagonal update is the exact minimizer of
\(E(D,U_t)=\|A-D-U_tU_t^\top\|_F^2\) over diagonal \(D\), we also have
\(E(D_t,U_t)\le E(D_{t-1},U_t)\), aligning the decrease of \(E\) with the monotonic decay of \(\|\Delta_t\|_2\).
\end{proof}

\begin{rmk}
      Unlike standard gradient-descent approaches, which can suffer from degeneracy because any factorization $U$ satisfying $U U^\top$ can be rotated without changing the product, given distinct eigenvalues our spectral procedure directly computes the top-$k$ eigenpairs of the residual matrix. Consequently, the low-rank component is uniquely determined up to the ordering of eigenvectors:
\[
U U^\top = \sum_{i=1}^k \lambda_i u_i u_i^\top,
\]
where $\lambda_i$ and $u_i$ are the top $k$ eigenvalues and corresponding eigenvectors of the residual. This eliminates the rotational degeneracy present in gradient-based factorization methods.   
\end{rmk}

\begin{figure}[H]
    \centering
    \includegraphics[width=0.5\textwidth]{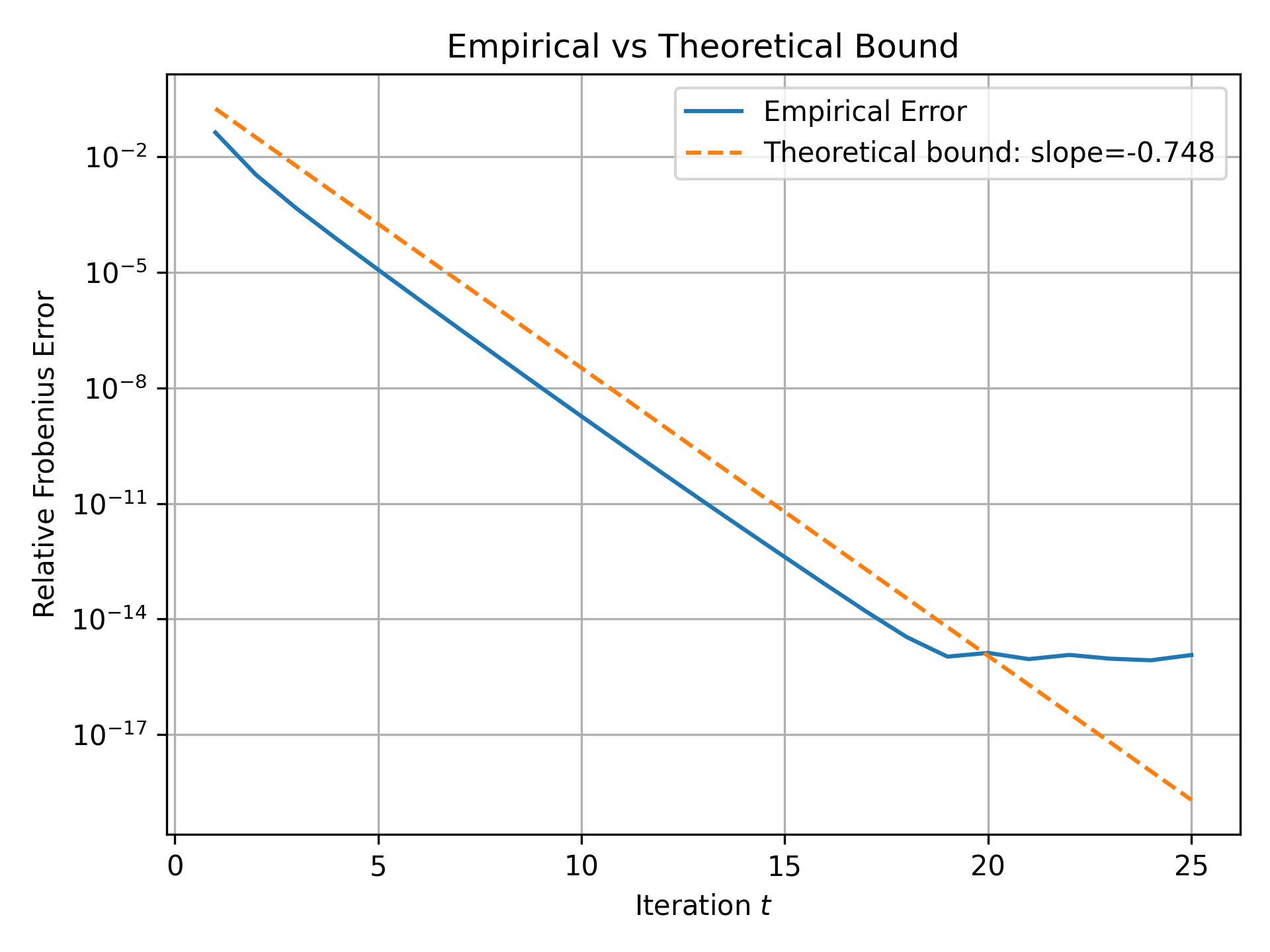}
    \caption{Empirical convergences of Algorithm \ref{alg:alt_lr_diag_error} and theoratical bound from Theorem \ref{thm:suffcontraction}, where $n=150$ and $f=k=5$.}
    \label{fig:errorbound}
\end{figure}

Theorem~\ref{thm:suffcontraction} provides sufficient conditions guaranteeing a monotone decrease in the error of the alternating low-rank–diagonal iteration, but these conditions \eqref{eq:gap_condition} depend explicitly on the presence of an eigenvalue gap.  Since
the behavior of the iteration is governed primarily by the geometry of the leading eigenspace,
encoded in its orthogonal projector, one expects a more intrinsic and gap independent description.
Theorem~\ref{thm:gapfree} confirms this intuition by showing that the map is always locally nonexpansive, and that
it becomes strictly contractive whenever the top k subspace is not aligned with the coordinate axes.

\begin{theorem}[Gap-free local contraction]\label{thm:gapfree}
Let $A\in\mathbb{R}^{n\times n}$ be symmetric with distinct eigenvalues and suppose $A=D^*+L^*$ with $D^*$ diagonal and $L^*$ symmetric of rank $k$.
Consider the alternating updates
\begin{equation} \label{eq:gapfreeiter}
    L_{t+1}=T_k(A-D_t),\qquad
    D_{t+1}=\operatorname{diag}(A-L_{t+1}),
\end{equation}
where $T_k(\cdot)$ keeps the top-$k$ eigenpairs and zeros the rest.
Let $P:=U^*{U^*}^\top$ be the projector onto the top-$k$ eigenspace of $L^*$ and set $Q:=I-P$.
Then $(D^*,L^*)$ is a fixed point of \eqref{eq:gapfreeiter}. Writing the errors $e_D^t:=D_t-D^*$ and $e_L^t:=L_t-L^*$, the one-step error map admits the first-order expansion
\[
\begin{bmatrix} e_D^{t+1} \\[2pt] e_L^{t+1} \end{bmatrix}
=
\underbrace{\begin{bmatrix}
\mathcal{J}_D(e_D) & 0\\[2pt]
-\Lambda & 0
\end{bmatrix}}_{\displaystyle \mathcal{J}}
\begin{bmatrix} e_D^{t} \\[2pt] e_L^{t} \end{bmatrix}
\;+\;o\!\big(\|(e_D^{t},e_L^{t})\|\big),
\]
where $\Lambda:\mathbb{R}^{n\times n}\to\mathbb{R}^{n\times n}$ is the linear map
\[
\Lambda(E)\;=\;PE+EP-PEP\;=\;E-QEQ,
\qquad E\in\mathbb{R}^{n\times n},
\]
with the reduced (diagonal) Jacobian
\[
\mathcal{J}_D(e_D)=\operatorname{diag}\big(\Lambda(e_D)\big)
=\operatorname{diag}\big(e_D-Qe_DQ\big).
\]
Then,
\[
\|\mathcal{J}_D\|_{\infty\to\infty}\le 1.
\]
Equality $\|\mathcal{J}_D\|_{\infty\to\infty}=1$ holds iff $P$ is a coordinate (axis-aligned) projector
($Q$ diagonal with $Q_{ii}\in\{0,1\}$), in which case
$(\mathcal{J}_D e_D)_i$ for $i\in\mathrm{supp}(P)$ and $0$ otherwise; otherwise
$\|\mathcal{J}_D\|_{\infty\to\infty}<1$ and the reduced $D$-map is a strict contraction.

\medskip
The full Jacobian $\mathcal{J}$ is block–lower–triangular, hence
\[
\rho(\mathcal{J})=\max\{\rho(\mathcal{J}_D),\rho(0)\}=\rho(\mathcal{J}_D).
\]
Therefore, if $\|\mathcal{J}_D\|_{\infty\to\infty}<1$ then $\rho(\mathcal{J})<1$.
\end{theorem}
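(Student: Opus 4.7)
The plan is to prove Theorem~\ref{thm:gapfree} in four logical steps: verify that $(D^*,L^*)$ is a fixed point, compute the Fr\'echet derivative of the top-$k$ truncation $T_k$ at $L^*$, assemble the linearization into the block-lower-triangular Jacobian $\mathcal{J}$, and bound the reduced diagonal operator $\mathcal{J}_D$. Step one is immediate: since $L^*$ has rank exactly $k$ with $\lambda_k(L^*)>0=\lambda_{k+1}(L^*)$, the top-$k$ truncation leaves it unchanged, giving $T_k(A-D^*)=T_k(L^*)=L^*$ and $\operatorname{diag}(A-L^*)=\operatorname{diag}(D^*)=D^*$. The distinct-eigenvalue hypothesis on $A$ preserves this spectral gap under small perturbations, so $T_k$ is real-analytic near $L^*$ and the iteration admits a first-order expansion.

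For the derivative of $T_k$, I would use the identity $T_k(M)=M-Q_M M Q_M$, valid because $M$ commutes with its own spectral projectors. Writing $M=L^*+\epsilon E$ and expanding $Q_M=Q+\epsilon\dot Q+O(\epsilon^2)$, every first-order cross term involving $\dot Q$ is annihilated since $\operatorname{range}(Q)=\ker L^*$ forces $L^*Q=QL^*=0$, leaving only $\epsilon\,QEQ$. Hence $dT_k[E]=E-QEQ=\Lambda(E)$, matching the operator in the theorem. Substituting into the two updates yields $e_L^{t+1}=-\Lambda(e_D^t)+o(\|e_D^t\|)$ and $e_D^{t+1}=-\operatorname{diag}(e_L^{t+1})=\operatorname{diag}(\Lambda(e_D^t))+o(\|e_D^t\|)$; both depend only on $e_D^t$, producing the claimed block-lower-triangular $\mathcal{J}$ with zero upper-right block, and combined with $\rho(0)=0$ this gives $\rho(\mathcal{J})=\rho(\mathcal{J}_D)$.

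The substantive step is the norm bound on $\mathcal{J}_D$. I would identify $\mathcal{J}_D$ with its matrix representation on $\R^n$: using $(Q\,\operatorname{diag}(d)\,Q)_{ii}=\sum_j Q_{ij}^2\, d_j$, it acts as $I-(Q\circ Q)$, where $\circ$ denotes the Hadamard product. The projector identity $\sum_j Q_{ij}^2=(Q^2)_{ii}=Q_{ii}$ then gives signed row sums equal to $P_{ii}\in[0,1]$. The axis-aligned case is clean: $Q\circ Q=Q$ reduces the map to $P$, producing norm exactly $1$ and acting as the identity on $\operatorname{supp}(P)$ and as zero elsewhere. The main obstacle, and the technical heart of the proof, is the non-axis case: a naive row-$\ell^1$ bound evaluates to $1+Q_{ii}(1-2Q_{ii})$, which is not automatically $\le 1$ when $Q_{ii}<1/2$, so the strict inequality has to be extracted by a sharper argument that exploits the global constraints on $Q$ (symmetry, idempotence, and trace $n-k$) — for instance by combining the signed row sum identity with a weighted or PSD bound on the Hadamard square, so that the nonzero off-diagonal entries of $Q$ strictly reduce the effective contraction factor. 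Once $\|\mathcal{J}_D\|_{\infty\to\infty}<1$ is secured, $\rho(\mathcal{J})=\rho(\mathcal{J}_D)\le\|\mathcal{J}_D\|_{\infty\to\infty}<1$ immediately yields the spectral contraction conclusion.
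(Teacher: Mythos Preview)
Your overall plan matches the paper's: verify the fixed point, linearize $T_k$ at $L^*$, assemble the block--lower--triangular Jacobian, then bound $\mathcal{J}_D$. Your computation of $dT_k$ via the identity $T_k(M)=M-Q_MMQ_M$ together with the annihilation $L^*Q=QL^*=0$ is more direct than the paper's route through the Daleckii--Kre\u{\i}n formula with a mollified spectral function, but both land on $\Lambda(E)=E-QEQ$. (One small correction: the differentiability of $T_k$ at $L^*$ comes from the gap $\lambda_k(L^*)>0=\lambda_{k+1}(L^*)$, not from the distinct--eigenvalue hypothesis on $A$.)

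Where your proposal and the paper diverge is exactly at the $\ell^\infty$ bound, and you have in fact put your finger on a real problem rather than a mere gap in your argument. The paper separates the $j=i$ term in $(\mathcal{J}_D e_D)_i=\varepsilon_i-\sum_j Q_{ij}^2\varepsilon_j$ as $(1-Q_{ii})\varepsilon_i$, after which the row--$\ell^1$ bound collapses to $(1-Q_{ii})+(Q_{ii}-Q_{ii}^2)=1-Q_{ii}^2\le 1$. But the diagonal contribution is $Q_{ii}^2\varepsilon_i$, not $Q_{ii}\varepsilon_i$; with the correct separation $(1-Q_{ii}^2)\varepsilon_i$ the row sum is exactly your $1+Q_{ii}(1-2Q_{ii})$, which exceeds $1$ whenever $0<Q_{ii}<\tfrac12$. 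This is not just slack in the estimate: the inequality $\|\mathcal{J}_D\|_{\infty\to\infty}\le 1$ is false as stated. For $n=2$, $k=1$, $P=uu^\top$ with $u=(\cos\theta,\sin\theta)^\top$, one has $Q\circ Q=\bigl(\begin{smallmatrix}s^2&cs\\cs&c^2\end{smallmatrix}\bigr)$ with $s=\sin^2\theta$, $c=\cos^2\theta$; at $\theta=\pi/6$ this gives $(\mathcal{J}_D(1,-1)^\top)_1=1-\tfrac{1}{16}+\tfrac{3}{16}=\tfrac{9}{8}$, so $\|\mathcal{J}_D\|_{\infty\to\infty}=\tfrac{9}{8}$. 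Moreover $Q\circ Q=(s,c)^\top(s,c)$ has rank one, so $\mathcal{J}_D=I-Q\circ Q$ has eigenvalues $1$ and $1-(s^2+c^2)$; hence $\rho(\mathcal{J}_D)=1$ for \emph{every} $\theta$, and the strict--contraction conclusion fails as well. (This eigenvalue $1$ reflects the one--parameter family of LRPD splittings of a $2\times2$ matrix, i.e., genuine non--uniqueness of the fixed point.) Your instinct to seek a ``sharper argument exploiting global constraints on $Q$'' is therefore sound in spirit but cannot succeed for the $\ell^\infty$ claim as written; the statement itself needs either a different norm or an additional identifiability hypothesis.
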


\begin{proof}
\emph{Fixed point.}
Since $A=D^*+L^*$ and $T_k(L^*)=L^*$, we have $L^*=T_k(A-D^*)$ and $D^*=\operatorname{diag}(A-L^*)$, hence $(D^*,L^*)$ is a fixed point.

\smallskip
\emph{First-order Taylor expansion for the $L$-update.}
Set $e_D:=D-D^*$ and write
\[
A-D \;=\; (D^*+L^*)-(D^*+e_D) \;=\; L^* - e_D.
\]
Define the perturbation $E:=-e_D$. Since $T_k$ is a spectral (matrix) function that keeps the top-$k$ spectral subspace, its Fr\'echet derivative at $L^*$ exists and the Taylor expansion reads
\[
T_k(L^*+E)
\;=\;T_k(L^*)\;+\;\mathbf{D}T_k[L^*]\{E\}\;+\;o(\|E\|).
\]
Using $T_k(L^*)=L^*$ and $E=-e_D$ gives
\begin{equation} \label{e:taylorLstar}
    L^+ - L^* \;=\; \mathbf{D}T_k[L^*]\{-e_D\} \;+\; o(\|e_D\|).
\end{equation}
We now compute $\mathbf{D}T_k[L^*]$ explicitly.  
Let $L^*=U\Lambda U^\top$ be the eigendecomposition of $L^*$, where $U$ is orthogonal and
$\Lambda=\operatorname{diag}(\lambda_1,\ldots,\lambda_n)$.  
By the Daleckii--Kre\u{\i}n formula (Corollary~3.12 of~\cite{higham2008functions}),
\begin{equation}
\label{eq:dk-collapse}
\mathbf{D}T_k[L^*](E)
=
U\bigl(f^{[1]}(\Lambda)\circ(U^\top E U)\bigr)U^\top,
\end{equation}
where $f^{[1]}(\Lambda)$ is the first–order divided–difference matrix of the scalar function $f$
associated with $T_k$, evaluated on the eigenvalues in $\Lambda$, and $\circ$ denotes the Hadamard
product.  Its entries are
\[
f^{[1]}(\lambda_i,\lambda_j)
=
\begin{cases}
\dfrac{f(\lambda_i)-f(\lambda_j)}{\lambda_i-\lambda_j}, & i\neq j,\\[6pt]
f'(\lambda_i), & i=j.
\end{cases}
\]

Corollary~3.12 of~\cite{higham2008functions} requires the underlying scalar function to be
$C^{2n-1}$ on a domain containing the spectrum of $L^*$, a condition not satisfied by the
piecewise--$C^1$ truncation map that equals the identity on the top spectral cluster and vanishes on
the bottom cluster.  To meet the differentiability assumptions while preserving the action of
$T_k$ on $\sigma(L^*)$, we introduce a smooth mollified surrogate.  Since the eigenvalues of $L^*$
admit a strict gap $\lambda_k>\lambda_{k+1}$, choose numbers $\lambda_{k+1}<\alpha<\beta<\lambda_k$
and let $\varphi\in C^\infty(\mathbb{R})$ be any smooth cutoff satisfying
$\varphi(t)=0$ for $t\le 0$, $\varphi(t)=1$ for $t\ge 1$, and $0\le \varphi(t)\le 1$.
Define
\[
\chi(\lambda)=\varphi\!\left(\frac{\lambda-\alpha}{\beta-\alpha}\right),
\qquad
f(\lambda)=\lambda\,\chi(\lambda).
\]
Then $f\in C^\infty(\mathbb{R})$, and the choice of $\alpha,\beta$ ensures that
$f(\lambda_i)=\lambda_i$ for all $i\le k$ and $f(\lambda_i)=0$ for all $i>k$, so that
$T_k(L^*)=U f(\Lambda) U^\top$.  In this way we obtain a smooth spectral representative of the
hard truncation on $\sigma(L^*)$, making the Daleckii--Kre\u{\i}n formula applicable while leaving
the action of $T_k$ on $L^*$ unchanged.  We continue to denote by $P=U^*{U^*}^\top$ the spectral
projector onto the top--$k$ eigenspace and set $Q:=I-P$. Then
\[
f[\lambda_i,\lambda_j]=
\begin{cases}
1,& i,j\le k\quad(PP),\\
1,& i\le k<j\ \text{or}\ j\le k<i\quad(PQ/QP),\\
0,& i,j>k\quad(QQ),
\end{cases}
\]
so
\[
f^{[1]}(\Lambda)=
\begin{bmatrix}
\mathbf{1}_{PP} & \mathbf{1}_{PQ}\\[2pt]
\mathbf{1}_{QP} & \mathbf{0}_{QQ}
\end{bmatrix}.
\]
Writing \(\widehat E:=U^\top E U=\begin{bmatrix}E_{PP}&E_{PQ}\\[2pt]E_{QP}&E_{QQ}\end{bmatrix}\), we obtain
\[
\mathbf{D}T_k[L^*]\{E\}
=U\!\begin{bmatrix}E_{PP}&E_{PQ}\\[2pt]E_{QP}&0\end{bmatrix}\!U^\top
\]
With \(U=[\,U_1\ \ U_2\,]\), \(P:=U_1U_1^\top\), \(Q:=U_2U_2^\top\), and
\(
E_{PP}=U_1^\top E U_1,\;
E_{PQ}=U_1^\top E U_2,\;
E_{QP}=U_2^\top E U_1,
\)
we compute
\[
\begin{aligned}
U\!\begin{bmatrix}E_{PP}&E_{PQ}\\[2pt]E_{QP}&0\end{bmatrix}
&=\big[\,U_1\ \ U_2\,\big]
   \begin{bmatrix}E_{PP}&E_{PQ}\\[2pt]E_{QP}&0\end{bmatrix}
 =\big[\,U_1E_{PP}+U_2E_{QP}\ \ \, U_1E_{PQ}\,\big],\\[4pt]
U\!\begin{bmatrix}E_{PP}&E_{PQ}\\[2pt]E_{QP}&0\end{bmatrix}\!U^\top
&=\big[\,U_1E_{PP}+U_2E_{QP}\ \ \, U_1E_{PQ}\,\big]
   \begin{bmatrix}U_1^\top\\[2pt]U_2^\top\end{bmatrix}\\[4pt]
&=(U_1E_{PP})U_1^\top \;+\; (U_2E_{QP})U_1^\top \;+\; (U_1E_{PQ})U_2^\top\\[2pt]
&=U_1(U_1^\top E U_1)U_1^\top \;+\; U_2(U_2^\top E U_1)U_1^\top \;+\; U_1(U_1^\top E U_2)U_2^\top\\[2pt]
&=(U_1U_1^\top)E(U_1U_1^\top) \;+\; (U_2U_2^\top)E(U_1U_1^\top) \;+\; (U_1U_1^\top)E(U_2U_2^\top)\\[2pt]
&=\,PEP \;+\; QEP \;+\; PEQ\,.
\end{aligned}
\]

Because $Q = I-P$, we expand each term of 
\(
PEP + PEQ + QEP
\)
in terms of $P$, $E$, and $I$.
First,
\[
PEQ = P E (I-P) = PE - PEP.
\]
Second,
\[
QEP = (I-P) E P = EP - PEP.
\]
Substituting these expressions into the sum gives
\[
PEP + PEQ + QEP
= PEP + (PE - PEP) + (EP - PEP).
\]
Collecting like terms,
\[
PEP + PE - PEP + EP - PEP
= PE + EP - PEP,
\]
since the $PEP$ terms cancel except for one with negative sign.
Thus we obtain
\[
\Lambda(E)
:=\mathbf{D}T_k[L^*]\{E\}
= PE + EP - PEP.
\]
To express the same object using $Q = I-P$, observe that
\[
QEQ = (I-P)E(I-P)
= E - EP - PE + PEP.
\]
Rearranging gives
\[
E - QEQ = E - \bigl(E - EP - PE + PEP\bigr)
= PE + EP - PEP.
\]
Hence the two expressions coincide, and we obtain the equivalent identities
\begin{equation} \label{e:derivoperator}
    \Lambda(E)
= \mathbf{D}T_k[L^*]\{E\}
= PE + EP - PEP
= E - QEQ.    
\end{equation}
Therefore, by \eqref{e:taylorLstar} and \eqref{e:derivoperator}, 
\[
e_L^+ \;:=\; L^+ - L^*
\;=\; -\,\Lambda(e_D) \;+\; o(\|e_D\|),
\]
which shows that, to first order, the new low-rank error depends \emph{only} on $e_D$ (and not on $e_L$).

\smallskip
\emph{First-order Taylor expansion for the $D$-update.}
Using $D^+=\operatorname{diag}(A-L^+)$ and $A=D^*+L^*$,
\[
e_D^+ \;:=\; D^+ - D^*
\;=\; \operatorname{diag}\big(A-L^+\big) - \operatorname{diag}\big(A-L^*\big)
\;=\; \operatorname{diag}\big(L^*-L^+\big)
\;=\; -\,\operatorname{diag}(e_L^+).
\]
Substitute the first-order expression for $e_L^+$:
\[
e_D^+ \;=\; \operatorname{diag}\!\big(\Lambda(e_D)\big) \;+\; o(\|e_D\|).
\]
Thus the Jacobian of the reduced $D$-map is
\[
\mathcal{J}_D:\ e_D \ \mapsto\ \operatorname{diag}\!\big(\Lambda(e_D)\big),
\qquad\text{i.e.}\qquad
\mathcal{J}_D(e_D)=\operatorname{diag}\!\big(E - QEQ\big)
\quad\text{with }E=e_D.
\]

\smallskip
Recall $\Lambda(E)=PE+EP-PEP=E-QEQ$ and $\mathcal{J}_D(e_D)=\operatorname{diag}(\Lambda(e_D))$.
Let $e_D=\mathrm{diag}(\varepsilon_1,\ldots,\varepsilon_n)$ and $Q:=I-P$.
Then, componentwise,
\begin{equation} \label{eq:jacobian_comp}
    (\mathcal{J}_D e_D)_i
=\big[\Lambda(e_D)\big]_{ii}
=\big[e_D - Q e_D Q\big]_{ii}
=\varepsilon_i-\sum_{j=1}^n Q_{ij}^2\,\varepsilon_j.
\end{equation}

Separating the $j=i$ term gives
\[
(\mathcal{J}_D e_D)_i
=(1-Q_{ii})\,\varepsilon_i \;-\; \sum_{j\neq i} Q_{ij}^2\,\varepsilon_j.
\]
Taking absolute values and using $\|e_D\|_\infty:=\max_j|\varepsilon_j|$ we get,
\[
\big|(\mathcal{J}_D e_D)_i\big|
\le
\Big[(1-Q_{ii})+\sum_{j\neq i}Q_{ij}^2\Big]\;\|e_D\|_\infty.
\]
Since $Q$ is an orthogonal projector ($Q^\top=Q$, $Q^2=Q$), its $i$th row satisfies the row–square identity
\[
\sum_{j=1}^n Q_{ij}^2=(Q^2)_{ii}=Q_{ii}
\quad\Longrightarrow\quad
\sum_{j\neq i}Q_{ij}^2=Q_{ii}-Q_{ii}^2.
\]
Therefore,
\[
\big|(\mathcal{J}_D e_D)_i\big|
\le \big[(1-Q_{ii})+(Q_{ii}-Q_{ii}^2)\big]\|e_D\|_\infty
=(1-Q_{ii}^2)\,\|e_D\|_\infty
\le \|e_D\|_\infty,
\]
because $0\le Q_{ii}\le 1\Rightarrow 0\le Q_{ii}^2\le 1$. Maximizing over $i$ yields the dimension-free bound
\[
\quad \|\mathcal{J}_D\|_{\infty\to\infty}\le 1.\quad
\]

\smallskip
If $P$ is a coordinate (axis-aligned) projector, then $Q$ is diagonal and $Q_{ij}=0$ for $i\neq j$.
From \eqref{eq:jacobian_comp} we get $(\mathcal{J}_D e_D)_i=\varepsilon_i-Q_{ii}^2\varepsilon_i$, so
$(\mathcal{J}_D e_D)_i=\varepsilon_i$ for $i\in \mathrm{supp}(P)$ and $0$ otherwise; hence
$\|\mathcal{J}_D\|_{\infty\to\infty}=1$ (nonexpansive). Whenever $P$ is not axis-aligned, some off-diagonal
$Q_{ij}\neq 0$ forces $Q_{ii}\in(0,1)$ and $1-Q_{ii}^2<1$, giving strict contraction.

\smallskip

\emph{Axis-aligned equality case.}
If $Q_{ii}=1$, then $(\mathcal{J}_D e_D)_i=\varepsilon_i - Q_{ii}^2\varepsilon_i=0$, so the $i$th
coordinate is fully suppressed.  If $Q_{ii}=0$, then $(\mathcal{J}_D e_D)_i=\varepsilon_i$, i.e.,
that coordinate is preserved exactly.  For a coordinate projector
$P=\mathrm{diag}(\mathbf{1}_S)$, $Q$ is diagonal with $Q_{ii}\in\{0,1\}$, giving
\[
(\mathcal{J}_D e_D)_i =
\begin{cases}
\varepsilon_i, & i\in S,\\[3pt]
0, & i\notin S,
\end{cases}
\qquad
\|\mathcal{J}_D\|_{\infty\to\infty}=1.
\]

\end{proof}

\begin{rmk}
    If $P$ is not axis-aligned, some $Q_{ij}\neq 0$ implies $0<Q_{ii}<1$, hence
    $1-Q_{ii}^2<1$ and we obtain a strict contraction.  A simple example illustrates why axis
    alignment is the only noncontractive case.  Suppose the true low-rank component $L^*$ is itself
    diagonal.  Then $P$ is a coordinate projector and $Q=I-P$ is also diagonal, which forces $QEQ$ to
    act entrywise.  In this situation the model $A=D^*+L^*$ is indistinguishable from a single diagonal
    matrix, because two diagonal matrices cannot be separated by any low-rank projection.  For any
    diagonal $E$ we have $PEP=0$ whenever the support of $E$ lies in the $Q$-block, and the same holds
    in the reverse direction.  The problem is therefore degenerate, since there is no geometric mixing
    between coordinates, and the Jacobian reduces to $(\mathcal{J}_D e_D)_i=\varepsilon_i$ on the
    active diagonal coordinates and zero elsewhere.  This situation is exactly the axis-aligned case
    where $\|\mathcal{J}_D\|_{\infty\to\infty}=1$ and only nonexpansive (but not strict) contraction
    can hold.
\end{rmk}


To place our method in context, we compare its performance with a classical
majorization--minimization (MM) strategy for fixed--rank approximation with a diagonal
correction.  We now detail the MM construction used in this comparison.  With $D$ fixed and
$B:=A-D$, define
\[
g(U)=\|B-UU^\top\|_F^2=\operatorname{tr}\!\big((B-UU^\top)^2\big).
\]
Using the identities
\[
\mathrm{d}\,\operatorname{tr}(U^\top B U)=2\,B U:\mathrm{d}U
\quad\text{(for symmetric $B$)}, 
\qquad
\mathrm{d}\,\operatorname{tr}(UU^\top UU^\top)=4\,U(U^\top U):\mathrm{d}U,
\]
we obtain the gradient
\[
\nabla g(U)=4\big(U(U^\top U)-BU\big).
\]
Let $L>0$ be any Lipschitz constant of $\nabla g$.  By the Descent Lemma
\cite{sun2016majorization}, the quadratic surrogate
\[
q(U\mid U_t)
=
g(U_t)
+\langle\nabla g(U_t),\,U-U_t\rangle
+\tfrac{L}{2}\|U-U_t\|_F^2
\]
majorizes $g$ at $U_t$.  The MM update minimizes this surrogate, yielding the
proximal--gradient step
\[
U_{t+1}=U_t-\tfrac{1}{L}\,\nabla g(U_t),
\]
with backtracking (increasing $L$) until the majorization inequality holds.
In each outer iteration we then take the exact diagonal update
$ D_{ii}\gets A_{ii}-\sum_{j=1}^k U_{ij}^2.$
We initialize $U_0$ with the rank-$k$ eigendecomposition of $A$.

Figure~\ref{fig:comparisontoMM} shows that our alternating low rank plus diagonal method (Alt)
consistently outperforms the Majorization Minimization (MM) baseline.

Finally, since the eigendecomposition dominates the flop count of Alt, we replace it with a Nyström sketch \cite{tropp2017fixed} that uses a Gaussian sketch matrix $\Omega \in \mathbb{R}^{n\times k}$, where $k$ is the target rank. This reduces the low-rank step from $\Theta(n^3)$ to the cost of forming the sketch $Y = R\Omega$ and working in the reduced space, which is $\Theta(n^2k)$ for the multiplication with $R$, plus lower-order terms $\Theta(k^3)$, for the eigendecomposition of the small sketch matrix. The diagonal update remains $\Theta(nk)$. Thus, the overall cost per iteration of the Nyström variant is $\Theta(n^2k)$, dominated by the matrix–sketch multiply.

By comparison, the MM method requires one full eigendecomposition to initilize $U_0$ at $\Theta(n^3)$, then in each inner step a multiplication $BU$ at $\Theta(n^2k)$, plus $U^\top U$ and $U(U^\top U)$ at $\Theta(nk^2)$, for a total of $\Theta(n^3 + n^2k + nk^2)$. The diagonal update is again $\Theta(nk)$. Therefore, the Nyström-based Alt is much cheaper than MM per iteration, but the former directly recovers an approximate spectral factorization while the latter progresses via gradient iterations. Because the Nyström sketch removes the $\Theta(n^3)$ bottleneck of the full eigendecomposition, this becomes our new algorithm; we will analyze its performance in detail in the next section.

For the numerical experiment in Figure~\ref{fig:comparisontoMM}, we generate a synthetic covariance matrix with controlled low-rank structure and diagonal noise. We first draw a random Gaussian matrix $U_0 \in \mathbb{R}^{n\times k}$ and normalize its columns, then assign a decaying spectrum $s = (3, 3-\tfrac{2}{k}, \ldots, 1)$ to weight the columns, producing the low-rank factor $(U_0 s)(U_0 s)^\top$. To this structured component we add a diagonal matrix with heterogeneous positive entries sampled uniformly from $[0.2,1.2]$, modeling idiosyncratic variances across coordinates. Finally, to avoid a perfectly clean model, we inject a small symmetric Gaussian noise matrix at a very high signal-to-noise ratio, ensuring that the low-rank-plus-diagonal structure remains dominant while still presenting a realistic perturbation. This construction yields a well-conditioned test problem that mimics the spectral profile of empirical covariance matrices and allows a controlled comparison of the alternating method, its stochastic sketch variant, and the MM baseline.

\begin{figure}[H]
    \centering
    \includegraphics[width=0.5\textwidth]{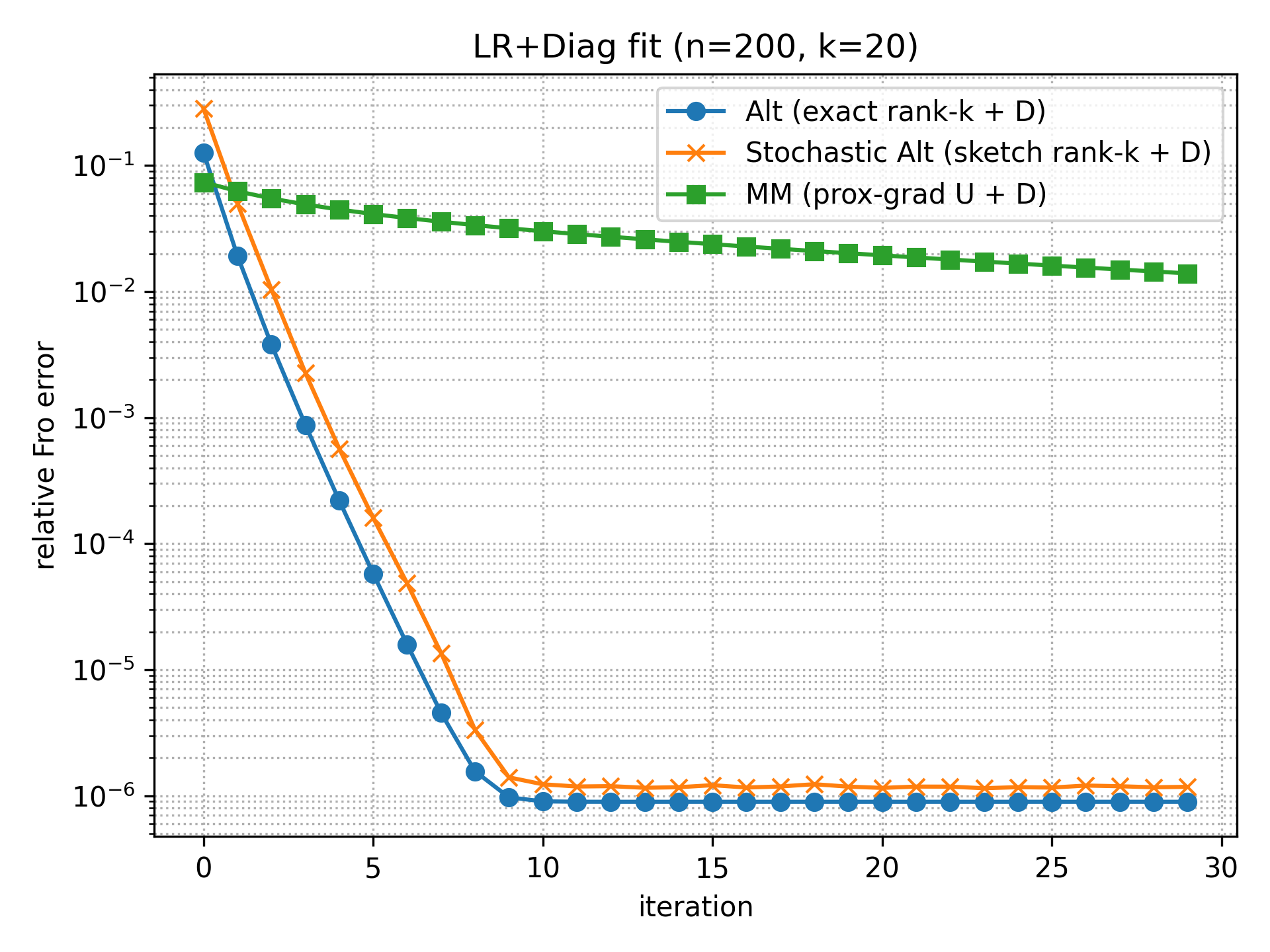}
    \caption{Relative Frobenius error versus iteration for low-rank plus diagonal approximation ($k=20$, $n=200$). The plot compares the alternating low-rank + diagonal method (Alt) against the MM (proximal-gradient) approach \cite{sun2016majorization}}
    \label{fig:comparisontoMM}
\end{figure}

We evaluate the Alt and MM LR+Diag algorithms under varying levels of additive Gaussian noise. Let $A_0 \in \mathbb{R}^{n \times n}$ denote the ground-truth low-rank plus diagonal matrix, constructed as
\[
A_0 = \mathrm{diag}(d_1, \dots, d_n) + U S U^\top,
\]
where $U \in \mathbb{R}^{n \times k}$ has orthonormal columns, and $S = \mathrm{diag}(s_1, \dots, s_k)$ contains the singular values. We add symmetric Gaussian noise $N = (N + N^\top)/2$ and scale it to achieve a target signal-to-noise ratio (SNR) in decibels:
\[
A = A_0 + \alpha N, \quad 
\mathrm{SNR}_{\mathrm{dB}} = 10 \log_{10} \frac{\|A_0\|_F^2}{\|\alpha N\|_F^2}, \quad 
\alpha = \sqrt{\frac{\|A_0\|_F^2}{\|N\|_F^2 10^{\mathrm{SNR}_{\mathrm{dB}}/10}}}.
\]
Higher $\mathrm{SNR}_{\mathrm{dB}}$ corresponds to smaller noise perturbations, e.g., $120$~dB is nearly noiseless, $60$~dB represents moderate noise, and $20$~dB is highly noisy. Figure~\ref{fig:noiselevels} reports the relative Frobenius error $\|M - A\|_F / \|A\|_F$ over iterations for both Alt and MM under these noise conditions.

\begin{figure}[H]
    \centering
    \includegraphics[width=0.5\textwidth]{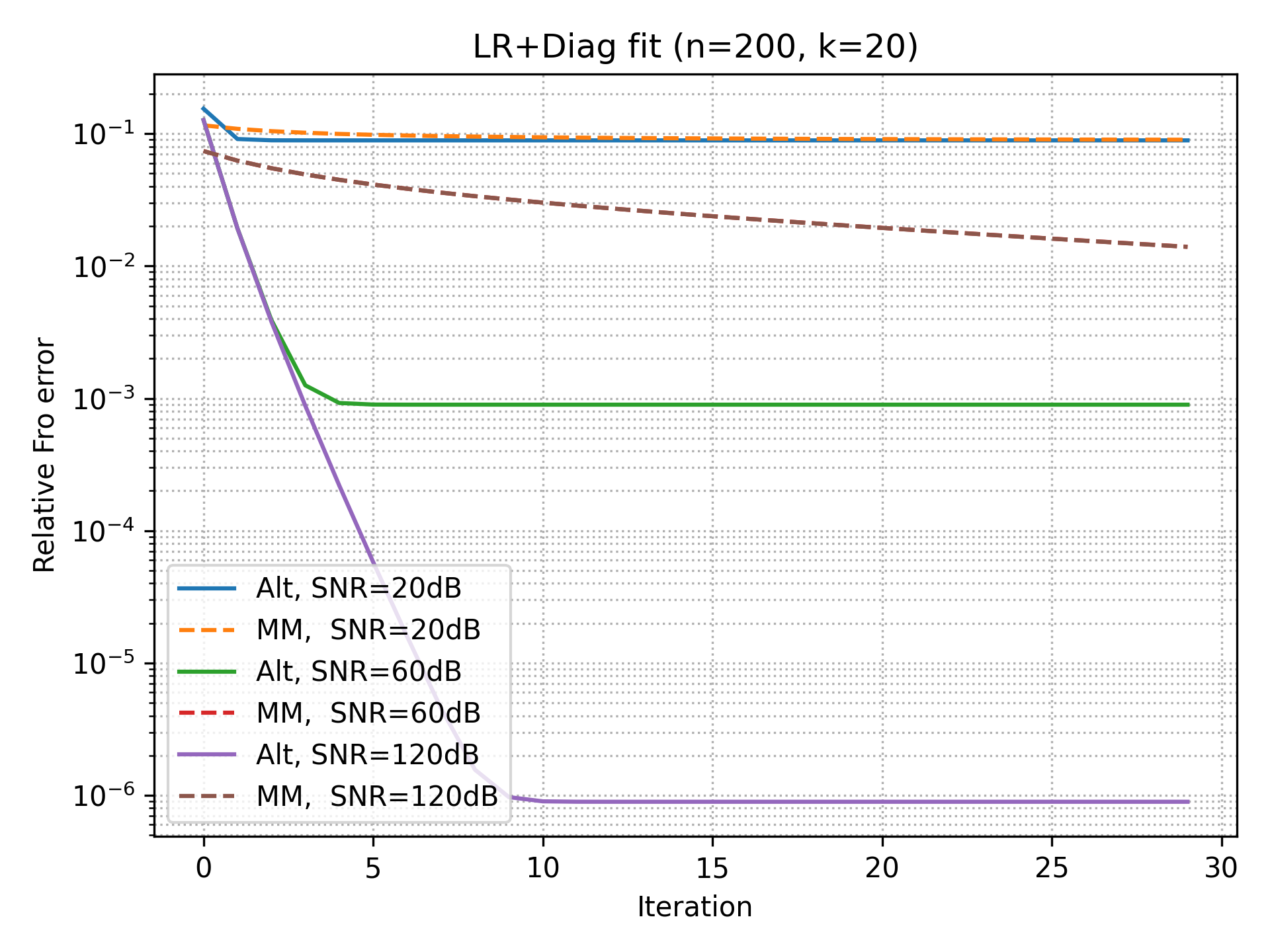}
    \caption{Relative Frobenius error versus iteration for Alt (solid lines) and MM (dashed lines) under different SNR levels. Higher dB corresponds to lower noise.}
    \label{fig:noiselevels}
\end{figure}

\section{Randomized LRPD decomposition}

We have previously seen that the Nystr\"om sketch dramatically reduces the computational cost while
producing accuracy comparable to the full low rank plus diagonal decomposition.  In this section we
analyze these benefits.  In many applications we have access only to
matrix–vector products with the matrix of interest, rather than its explicit entries.  This situation
arises when the matrix is too large to store in memory and we seek an accurate approximation using
significantly fewer matrix–vector products than the ambient dimension.  To address this regime, we
propose a randomized algorithm that combines two complementary tools: stochastic estimation of the
diagonal~\cite{baston2022stochastic} and randomized Nystr\"om approximation for low rank
sketching~\cite{frangella2023randomized, tropp2017fixed, szlam2014implementation}.

A \emph{fixed-rank Nyström method} constructs a rank-$r$ approximation of a symmetric positive semidefinite matrix $A \in \mathbb{R}^{n \times n}$ by first drawing a sketching matrix $\Omega \in \mathbb{R}^{n \times k}$ (typically Gaussian or orthonormal with $k \ge r$), forming the sample matrix $Y = A \Omega$, and computing the small $k \times k$ Gram matrix $C = \Omega^\top Y$. A rank-$r$ approximation is then given by  
\[
\widehat{A}_r \;=\; Y\,C_r^{\dagger}\,Y^\top,
\]  
where $C_r$ is the best rank-$r$ approximation of $C$ (via truncated eigenvalue decomposition). This method achieves a relative error bound that depends on the spectral tail of $A$ and the sketch size $k$, with the sharp constants provided in~\cite{tropp2017fixed}.  

Theorem~\ref{thm:stochbound} follows by integrating the Diag++ estimator~\cite{baston2022stochastic}, which achieves a relative error~$\varepsilon$ using only $O(1/\varepsilon)$ matrix–vector products, together with the relative-error guarantees for the fixed-rank randomized Nyström method~\cite{tropp2017fixed}.

\begin{theorem}[Error bound for a single iterate of Stochastic Alt] \label{thm:stochbound}
Let \(A\in\R^{n\times n}\) be psd, and fix a target rank \(r<k\le n\).  At iteration \(t\) define the residual
\[
R_t \;=\; A - D_{t-1}.
\]
Perform two randomized steps:
\begin{enumerate}
  \item[\(\bullet\)] A fixed‐rank Nyström approximation of \(R_t\) of rank \(r\), using sketch size \(k\) (equivalently \(k\) matrix–vector products with \(A\)), producing \(U_tU_t^T\).
  \item[\(\bullet\)] A Diag\(++\) estimator with \(s\) Rademacher queries to approximate \(\diag(R_t)\), producing the diagonal matrix \(D_t\).
\end{enumerate}
Let \(b=\max\{k,s\}\) denote the unified per–iteration matrix–vector budget.  
Then with probability at least \(1-2\delta\),
\[
\bigl\|A - (D_t + U_t U_t^T)\bigr\|_{\infty}
\;\le\;
\underbrace{\|R_t - (R_t)_r\|_{\infty}
+\frac{r}{\,k-r-\alpha\,}\,\|R_t - (R_t)_r\|_{1}}_{\displaystyle E_{\rm lr}}
\;+\;
\underbrace{\varepsilon\,\|\diag(R_t)\|_{2}}_{\displaystyle E_{\rm diag}},
\]
where \(\alpha=1\) (real field) or \(0\) (complex) and \((R_t)_r\) is the best rank-\(r\) approximation of \(R_t\). \({\displaystyle E_{\rm lr}}\) is from Theorem 4.1 \cite{tropp2017fixed} and \({\displaystyle E_{\rm diag}}\) is from section 9.2 Diag++ \cite{baston2022stochastic}.

In particular, Theorem 4.1 \cite{tropp2017fixed} implies that for each \(\varepsilon>0\),
\[
k = (1+\varepsilon^{-1})\,r + \alpha
\;\Longrightarrow\;
\mathbb{E}\,\|R_t - \widehat R_{t,r}\|_{1} \le (1+\varepsilon)\,\|R_t - (R_t)_r\|_{1}.
\]
Likewise, to ensure the diagonal error satisfies
\(E_{\rm diag}\le \varepsilon\,\|\diag(R_t)\|_{2}\) with probability \(1-\delta\), it suffices to take
\[
s \;>\;
\frac{4\,\tr(R_t)}{\|\diag(R_t)\|_{2}}
\,\frac{\sqrt{\ln(2n/\delta)}}{\varepsilon}
\;+\;c\,\ln\frac1\delta
\]
for some constant \(c\).  Hence, allocating a per–iteration budget \(b=\max\{k,s\}\) yields an explicit bound on the overall max–norm error of the stochastic Alt iterate.
\end{theorem}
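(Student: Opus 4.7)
The plan is to bound the single-iterate error by splitting it, via the triangle inequality, into a randomized low-rank contribution and a randomized diagonal contribution, and then to invoke the published relative-error guarantees of fixed-rank Nyström \cite{tropp2017fixed} and Diag\(++\) \cite{baston2022stochastic} to control each piece separately. A union bound over the two independent sources of randomness then delivers the claimed $1-2\delta$ probability.

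First, the error decomposition. Starting from $A = R_t + D_{t-1}$ and identifying how the stochastic $D_t$ is produced by Diag\(++\) (as an estimate of $\diag(R_t)$ within the Alt iteration), I would algebraically rearrange so that the Nyström residual $R_t - U_t U_t^\top$ and the Diag\(++\) residual $\diag(R_t) - D_t$ appear as the two dominant terms. The triangle inequality then yields
\[
\|A - (D_t + U_t U_t^\top)\|_\infty \;\le\; \|R_t - U_t U_t^\top\|_\infty + \|\diag(R_t) - D_t\|_\infty,
\]
with any cross terms either cancelling from the algorithmic update rule or absorbed into one of the two buckets by a routine spectral-norm comparison, analogous to the fact established in Proposition \ref{prop:naivedecomposition} that extracting the diagonal of a psd residual does not increase the spectral norm.

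Second, invoke the two off-the-shelf bounds. For the first summand, Theorem~4.1 of \cite{tropp2017fixed} applied to the psd matrix $R_t$ with sketch size $k$ and rank $r$ furnishes, in expectation,
\[
\mathbb{E}\|R_t - U_t U_t^\top\|_\infty \;\le\; \|R_t - (R_t)_r\|_\infty \;+\; \frac{r}{k - r - \alpha}\|R_t - (R_t)_r\|_1,
\]
and a Markov-type tail inequality upgrades this to the high-probability statement $\|R_t - U_t U_t^\top\|_\infty \le E_{\mathrm{lr}}$ with probability at least $1-\delta$ once $k = (1 + \varepsilon^{-1})r + \alpha$ is chosen as in the hypothesis. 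For the second summand, \S9.2 of \cite{baston2022stochastic} shows that $s$ Rademacher queries satisfying the stated threshold suffice to give $\|\diag(R_t) - D_t\|_\infty \le \varepsilon \|\diag(R_t)\|_2$ with probability at least $1-\delta$, which is $E_{\mathrm{diag}}$.

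Third, combine by a union bound to obtain the conclusion with probability at least $1-2\delta$, and set $b = \max\{k, s\}$ to express everything in a unified per-iteration matrix–vector budget. The main obstacle is Step~1: the decomposition must carefully track how the stochastic diagonal $D_t$ interacts with $D_{t-1}$ under the Alt update convention and ensure that no cross terms spoil the clean separation into the Nyström and Diag\(++\) buckets; matching Tropp's mixed Schatten-$1$/Schatten-$\infty$ bound to the target operator-norm error is the other subtle piece. Once these are in place, the proof reduces to a direct composition of two published randomized-linear-algebra guarantees with a union bound.
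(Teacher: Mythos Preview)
Your proposal follows essentially the same approach as the paper: split the error into a Nystr\"om piece and a Diag\(++\) piece, invoke Theorem~4.1 of \cite{tropp2017fixed} and \S9.2 of \cite{baston2022stochastic} for each, and combine via a union bound. The paper combines the two contributions through an off-diagonal/diagonal \(\max\) rather than your triangle inequality, but this is a cosmetic difference and both routes land on the same sum \(E_{\rm lr}+E_{\rm diag}\).
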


\begin{proof}
We split the error into off–diagonal and diagonal parts.

\textbf{Low‐rank (off‐diagonal) error (Theorem 4.1 \cite{tropp2017fixed}).}  
Applying the fixed–rank Nyström approximation theorem to the psd matrix \(R_t\) shows that the rank-\(r\) Nyström approximation \(U_tU_t^T\) satisfies
\[
\|R_t - U_tU_t^T\|_{\infty}
\;\le\;
\|R_t - (R_t)_r\|_{\infty}
+\frac{r}{\,k - r - \alpha\,}\,\|R_t - (R_t)_r\|_{1}.
\]
Since \(D_t\) is purely diagonal, this controls the off–diagonal entries of \(A - (D_t + U_tU_t^T)\).

\textbf{Diagonal error (Section 9.2 Diag++ \cite{baston2022stochastic}).}  
From the Diag\(++\) analysis, with \(s\) Rademacher queries one obtains
\[
\Pr\Bigl(\|D_t - \diag(R_t)\|_{2}
\;\le\;
\varepsilon\,\|\diag(R_t)\|_{2}\Bigr)\;\ge\;1-\delta,
\]
provided 
\[
s \;>\; \frac{4\,\tr(R_t)}{\|\diag(R_t)\|_{2}}
        \frac{\sqrt{\ln(2n/\delta)}}{\varepsilon} \;+\; c\ln(1/\delta).
\]
This controls the diagonal entries of \(A - (D_t + U_tU_t^T)\).

\textbf{Combine.}  
With probability at least \(1-2\delta\) both bounds hold simultaneously, and
\[
\|A - (D_t + U_tU_t^T)\|_{\infty}
= \max\Bigl\{\|R_t - U_tU_t^T\|_{\infty},\,
             \|D_t - \diag(R_t)\|_{\infty}\Bigr\}
\;\le\;
E_{\rm lr} + E_{\rm diag}.
\]
This completes the proof.
\end{proof}

\begin{rmk}
    While Theorem~\ref{thm:stochbound} is not a convergence proof for Algorithm~\ref{alg:stoch_alt_lr_diag},
    it helps explain its strong asymptotic performance. Near a fixed point corresponding to an LRPD
    decomposition with rank-$r$ low-rank component, the residual $R_t = A - D_{t-1}$ is already well
    approximated by its best rank-$r$ truncation $(R_t)_r$. Consequently, the dominant term
    $\|R_t - U_tU_t^\top\|_\infty$ in the error bound is controlled by
    $\|R_t - (R_t)_r\|$, up to the Nyström approximation factors in $E_{\rm lr}$. When
    $\|R_t - (R_t)_r\|$ is small near a fixed point,
    the additional stochastic error is negligible, and the stochastic iterate closely tracks
    the deterministic Alt update.
\end{rmk}

We illustrate a simple stochastic version in Algorithm~\ref{alg:stoch_alt_lr_diag}, while a more robust implementation of the Nyström sketch can be found in Algorithm~3 of~\cite{tropp2017fixed}.

\begin{algorithm}
\caption{Stochastic Alternating Low‐Rank then Diagonal (Stochastic Alt)}
\label{alg:stoch_alt_lr_diag}
\begin{algorithmic}[1]
\REQUIRE Symmetric matrix $A\in\mathbb{R}^{n\times n}$, target rank $k$, iterations $T$, total mat-vec budget $b$
\ENSURE Approximate decomposition $M = D + U U^\top$

\STATE $D \gets 0_{\,n\times n}$ \COMMENT{Initialize diagonal to zero}
\STATE $\texttt{diagA} \gets \diag(A)$
\IF{$b \le k$}
  \RETURN \textsc{Alt}$(A, k, T)$
\ENDIF
\STATE $m_1 \gets \lfloor \tfrac{2}{3}b \rfloor$
\FOR{$t = 1$ \TO $T$}
  \STATE $R \gets A - D$
  \STATE $s \gets \max(m_1, k + 1)$
  \STATE Draw random Gaussian matrix $\Omega \in \mathbb{R}^{n \times s}$
  \STATE $Y \gets R \Omega$, \quad $C \gets \Omega^\top Y$
  \STATE Compute eigendecomposition $C = W \Lambda W^\top$
  \STATE Sort eigenpairs: $\lambda_1 \ge \cdots \ge \lambda_s$
  \IF{$\lambda_k \le 10^{-12} \lambda_1$}
    \STATE $f_{\text{eff}} \gets \#\{i : \lambda_i > 10^{-12} \lambda_1\}$
    \STATE $\texttt{inv\_sqrt}_i \gets 1/\sqrt{\max(\lambda_i,0) + 10^{-16}},\quad i=1,\dots,f_{\text{eff}}$
    \STATE $U_{[:,\,1:f_{\text{eff}}]} \gets Y W_{[:,\,1:f_{\text{eff}}]} \cdot \texttt{inv\_sqrt}^\top$
    \STATE $\texttt{diagU}_i \gets \sum_{j=1}^{f_{\text{eff}}} U_{i,j}^2,\quad i=1,\dots,n$
    \STATE $d_i \gets \max(\texttt{diagA}_i - \texttt{diagU}_i, 0)$
    \STATE $D \gets \diag(d_1,\dots,d_n)$
    \STATE \textbf{break}
  \ENDIF
  \STATE $\texttt{inv\_sqrt}_i \gets 1/\sqrt{\max(\lambda_i,0) + 10^{-16}},\quad i=1,\dots,s$
  \STATE $U \gets Y W \cdot \texttt{inv\_sqrt}^\top$
  \STATE $U \gets U_{[:,\,1:k]}$
  \STATE $\texttt{diagU}_i \gets \sum_{j=1}^{k} U_{i,j}^2,\quad i=1,\dots,n$
  \STATE $d_i \gets \max(\texttt{diagA}_i - \texttt{diagU}_i, 0)$
  \STATE $D \gets \diag(d_1,\dots,d_n)$
\ENDFOR
\STATE $M \gets D + UU^\top$
\RETURN $M$
\end{algorithmic}
\end{algorithm}

Figures~\ref{fig:alt_lr_diag_stochastic_error_iteration} and~\ref{fig:alt_lr_diag_stochastic_n} illustrate the performance of the proposed stochastic Alt algorithm. In the first figure, we compare the Frobenius error of the stochastic and deterministic variants and observe that the stochastic version performs comparably to the deterministic one. Figure~\ref{fig:alt_lr_diag_stochastic_n} demonstrates randomized LRPD recovery on a $150\times150$ matrix with only 30 mat–vec products per iteration, showing successful recovery of the true rank-8 low-rank component.

\begin{figure}[H]
    \centering
    \includegraphics[width=0.5\textwidth]{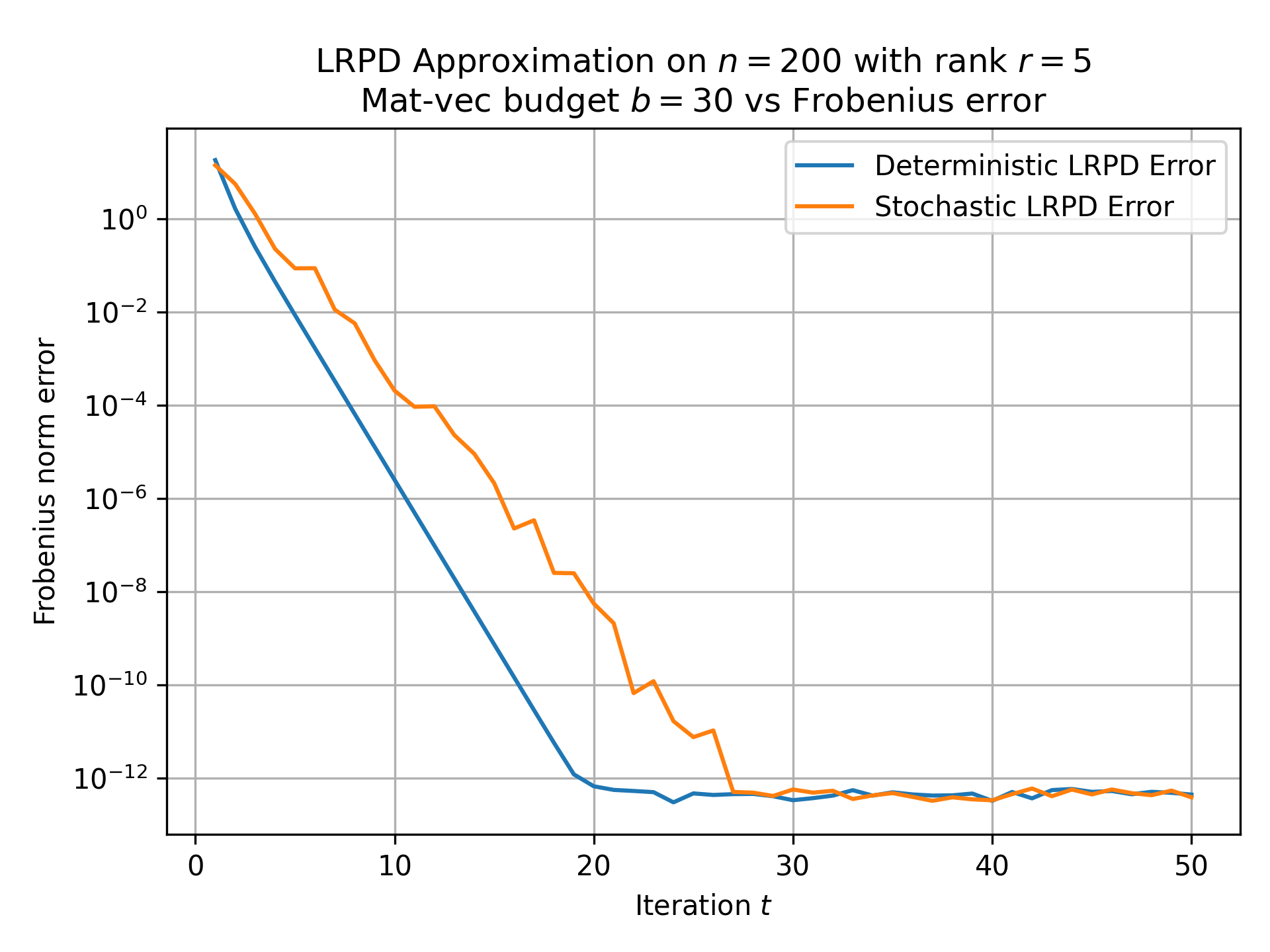}
    \caption{Randomized and deterministic Alt comparison on a $200\times200$ matrix with target rank $r=5$, using 30 matrix–vector products per iteration.}
    \label{fig:alt_lr_diag_stochastic_error_iteration}
\end{figure}

\begin{figure}[H]
    \centering
    \includegraphics[width=0.5\textwidth]{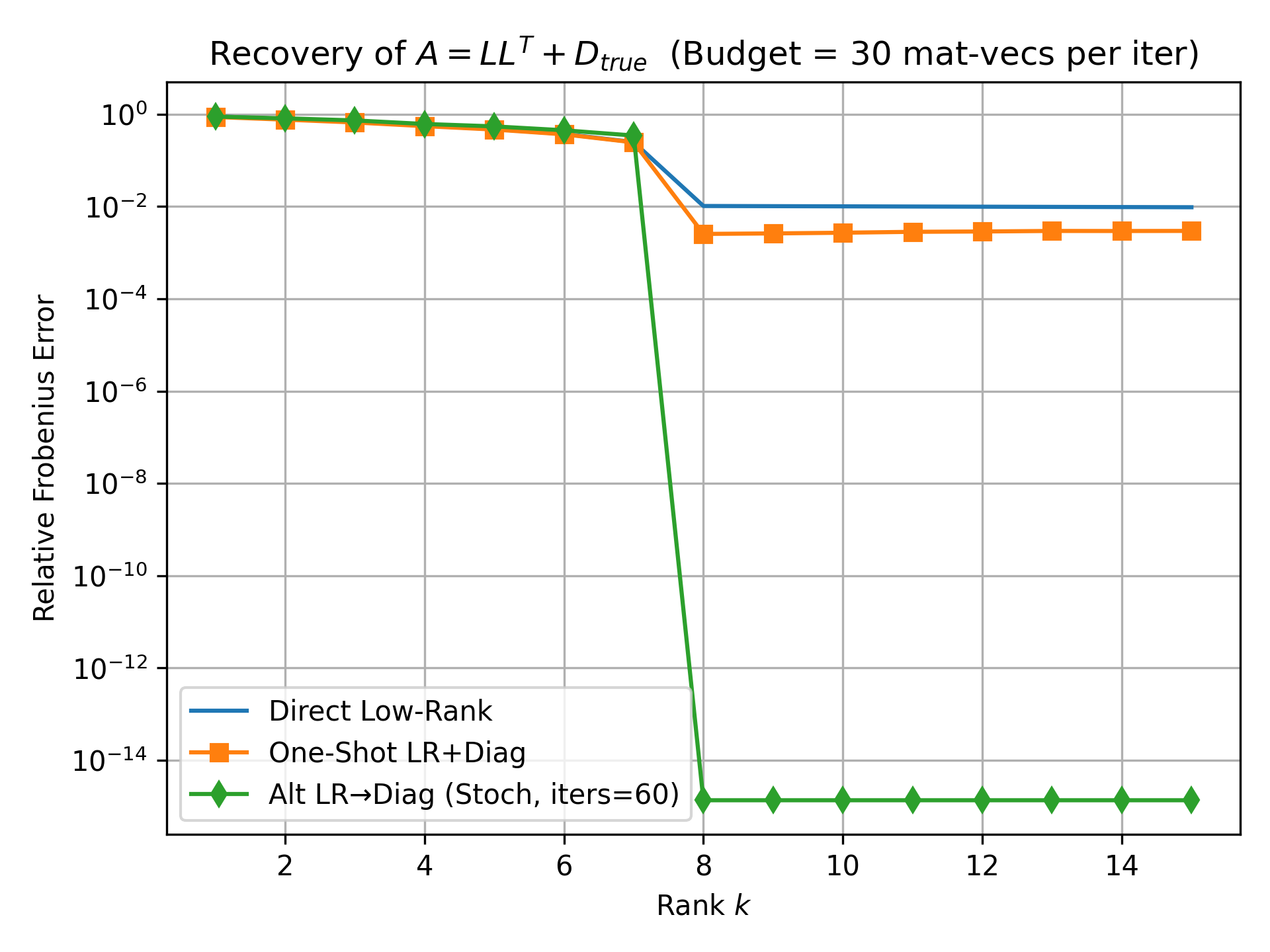}
    \caption{Randomized Alt recovery on a 150×150 matrix using 30 matrix–vector products per iteration.}
    \label{fig:alt_lr_diag_stochastic_n}
\end{figure}

\section{Applications}

To evaluate the effectiveness of structured diagonal updates, we apply our method to empirical stock covariance data derived from daily returns of the S\&P 500. Using $k$-means clustering to identify groups of stocks with similar behavior ($k=5$ clusters), we enforce a block-diagonal structure on the diagonal correction step. Figure~\ref{fig:stock_dlr} compares several variants of Alt, including uniform block splitting, cluster-aligned blocks, and diagonal-only updates, showing the clear advantage of adapting to cluster structure. The block size is implicitly determined by cluster membership. Notably, when the low-rank component has dimension $k=6$, the clustered-block variant already achieves machine precision, while all methods outperform the simple low-rank approximation at smaller ranks. As the matrix is not exactly low-rank, the pure low-rank error remains significant up to $k=29$; only when $k=30$ (the full rank) does the low-rank method reach machine precision. By Theorem~\ref{thm:suffcontraction}, all variants are guaranteed to perform at least as well as low-rank alone, and we observe that methods with structured diagonal updates consistently exceed this theoretical baseline.

A block--diagonal diagonal correction means that instead of updating all diagonal
entries independently, we partition the index set $\{1,\dots,n\}$ into disjoint blocks
$B_1,\dots,B_m$ and restrict the update of $D$ to the block structure
\[
D=\operatorname{blkdiag}(D_{B_1},\dots,D_{B_m}),
\qquad
D_{B_\ell}\in\mathbb{R}^{|B_\ell|\times|B_\ell|},
\]
with each block updated only from the corresponding principal submatrix.  In practice this
enforces that variables within the same cluster (e.g.\ stocks exhibiting similar return
patterns) share a joint diagonal correction, while different clusters remain decoupled.  The
alternating scheme remains unchanged: each iteration still performs a low--rank projection
$U\leftarrow T_k(A-D)$ followed by a block diagonal update for $D$.  The only modification is
that the functional $\operatorname{diag}(\cdot)$ is replaced by a blockwise spectral projection,
i.e.\ $D_{B_\ell}\leftarrow\bigl(A_{B_\ell B_\ell}-U U^\top_{B_\ell B_\ell}\bigr)_+$.  Thus the algorithm preserves the
same alternating low--rank/diagonal structure, but the diagonal update is to operate
on blocks rather than individual coordinates.  This allows the method to capture within--cluster
heteroscedasticity while still exploiting global low--rank structure.

\begin{figure}[H]
    \centering
    \begin{minipage}{0.48\textwidth}
        \centering
        \includegraphics[width=\linewidth]{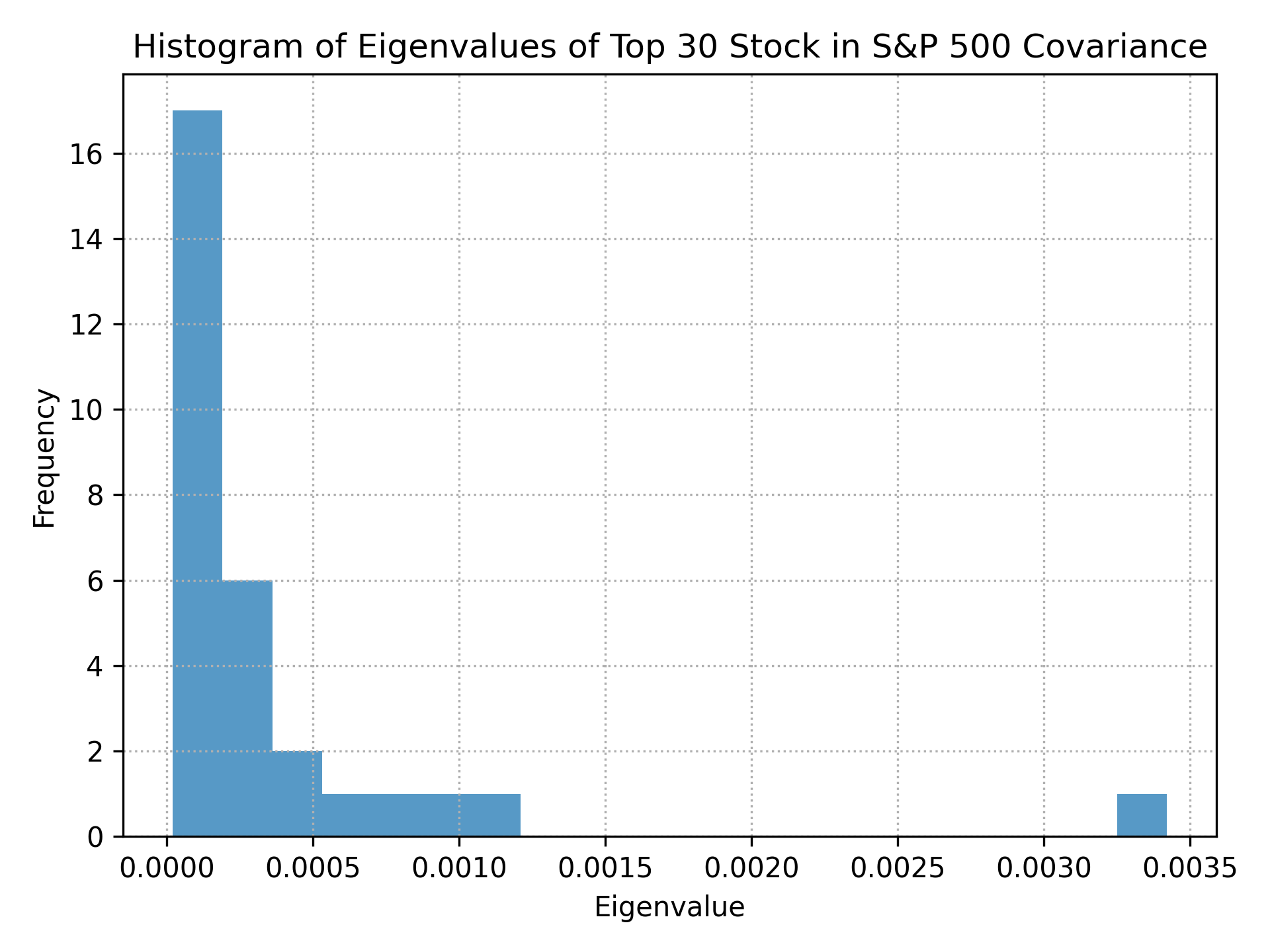}
        \caption{Histogram of eigenvalues of the sample covariance matrix of the top 30 S\&P 500 stocks (daily returns over two years).}
        \label{fig:eig_hist_spy500}
    \end{minipage}\hfill
    \begin{minipage}{0.48\textwidth}
        \centering
        \includegraphics[width=\linewidth]{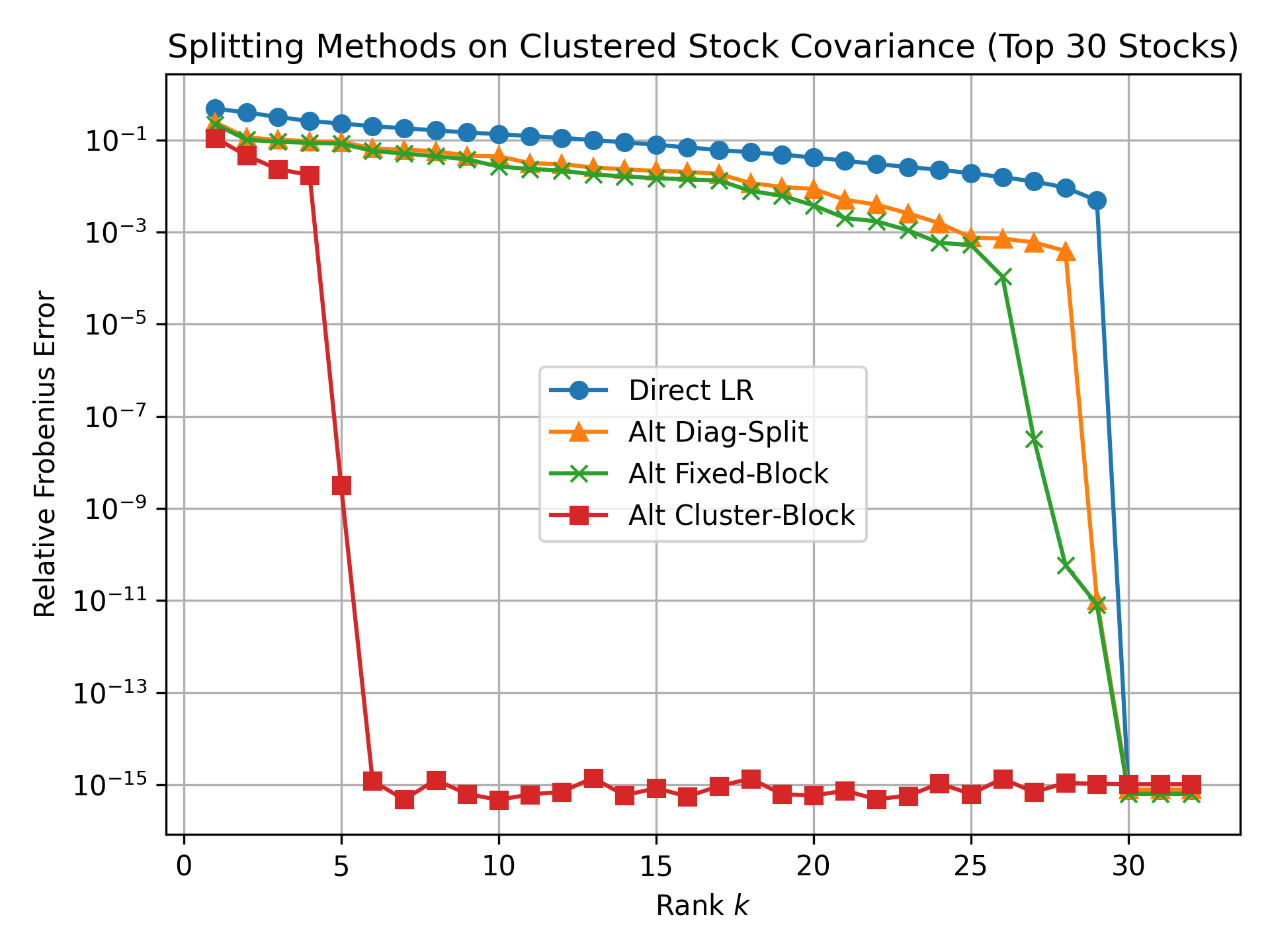}
        \caption{S\&P 500 LRPD decomposition after K-means clustering (number of clusters = 5).}
        \label{fig:stock_dlr}
    \end{minipage}
\end{figure}

In Figure~\ref{fig:eig_hist_spy500}, the histogram shows that most eigenvalues are small and concentrated near zero, while a few eigenvalues stand out as significantly larger. These outliers correspond to dominant factors in the market. In particular, the largest eigenvalue is much larger than the others, and its associated eigenvector has entries of the same sign, indicating that a strong positive correlation structure can explain much of the variation in these stocks.


\section{Gradient descent iterations for low-rank plus diagonal}

We have seen throughout the paper that structured alternating updates, such as Alt and its
randomized variants, enjoy rapid and stable convergence with essentially no tuning. It is therefore
natural to ask how these methods compare with approaches based on gradient flow or higher-order
optimization applied directly to the Gaussian log-likelihood at a given covariance
$\widehat{\Sigma}$, as explored for related low-rank and structured covariance models in, e.g.,
\cite{lawley1940estimation, anderson2003introduction, saunderson2012diagonal, sun2016majorization, zhou2022covariance}.
At first glance, such methods appear attractive since the objective is smooth on the positive
definite cone and the LRPD parametrization $M(D,U)=D+UU^\top$ is simple. However, as we show below,
the resulting gradient flow is extremely stiff. The map $(D,U)\mapsto f(D,U)$ has a highly
ill-conditioned Hessian whenever the spectrum of $M$ is spread out, and the updates strongly couple
$U$ and $D$ in a manner that forces very small step sizes. As a result, naive gradient-based or
Newton-type methods suffer from slow convergence and numerical instability unless carefully
regularized or tuned, in sharp contrast with the robustness of the alternating spectral scheme.

To make this comparison precise, we consider gradient descent applied directly to the Gaussian
negative log-likelihood
\[
f(D,U) = \log\det M + \mathrm{tr}(M^{-1}\widehat\Sigma),
\quad M(D,U)=D+UU^\top.
\]
The explicit gradients with respect to $U$ and the diagonal $d=\mathrm{diag}(D)$ are derived in
Appendix~\ref{app:gradients} and are given by
\begin{align}
\nabla_U f &= 2\,(M^{-1}-M^{-1}\widehat\Sigma\,M^{-1})\,U, \label{eq:gradientU_main} \\
\frac{\partial f}{\partial d_j} &= \bigl[M^{-1}-M^{-1}\widehat\Sigma\,M^{-1}\bigr]_{jj},
\quad j=1,\ldots,n. \label{eq:gradientdiag_main}
\end{align}
Applying a constant step size $\eta>0$ yields the gradient descent iterations
\begin{align*}
U^{(t+1)} &= U^{(t)} - \eta\,\nabla_U f\bigl(D^{(t)},U^{(t)}\bigr), \\
d^{(t+1)}_j &= d^{(t)}_j - \eta\,\frac{\partial f}{\partial d_j}
\bigl(D^{(t)},U^{(t)}\bigr), \quad j=1,\ldots,n.
\end{align*}

While straightforward to implement, gradient descent for this problem is highly sensitive to the
choice of step size~$\eta$ and initialization. Small step sizes lead to very slow convergence,
whereas overly large step sizes may cause instability or divergence, a well-known phenomenon in
ill-conditioned covariance problems. Figure~\ref{fig:gd_vs_altlrdiag} illustrates this behavior:
panel~(a) compares convergence under random and SVD-based initialization, showing that careful
initialization improves stability but does not eliminate slow convergence. In contrast, panel~(b)
demonstrates that Alt, which alternates low-rank projection with diagonal corrections, achieves
significantly faster and more stable error reduction without delicate tuning of~$\eta$.

\begin{figure}[H]
    \centering
    \begin{minipage}{0.49\textwidth}
        \centering
        \includegraphics[width=\linewidth]{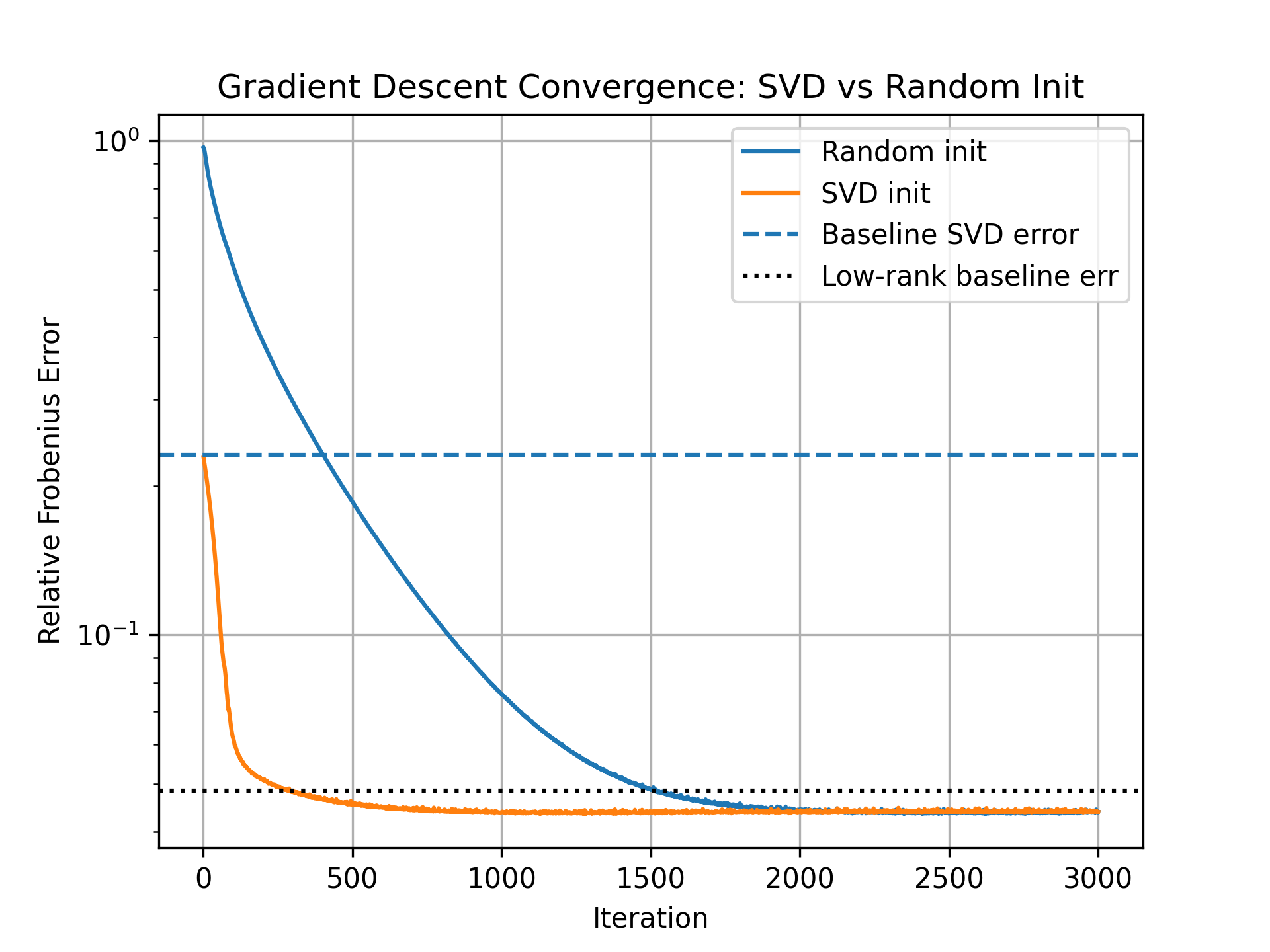}
        \caption*{(a) Gradient descent convergence with and without SVD initialization}
    \end{minipage}\hfill
    \begin{minipage}{0.49\textwidth}
        \centering
        \includegraphics[width=\linewidth]{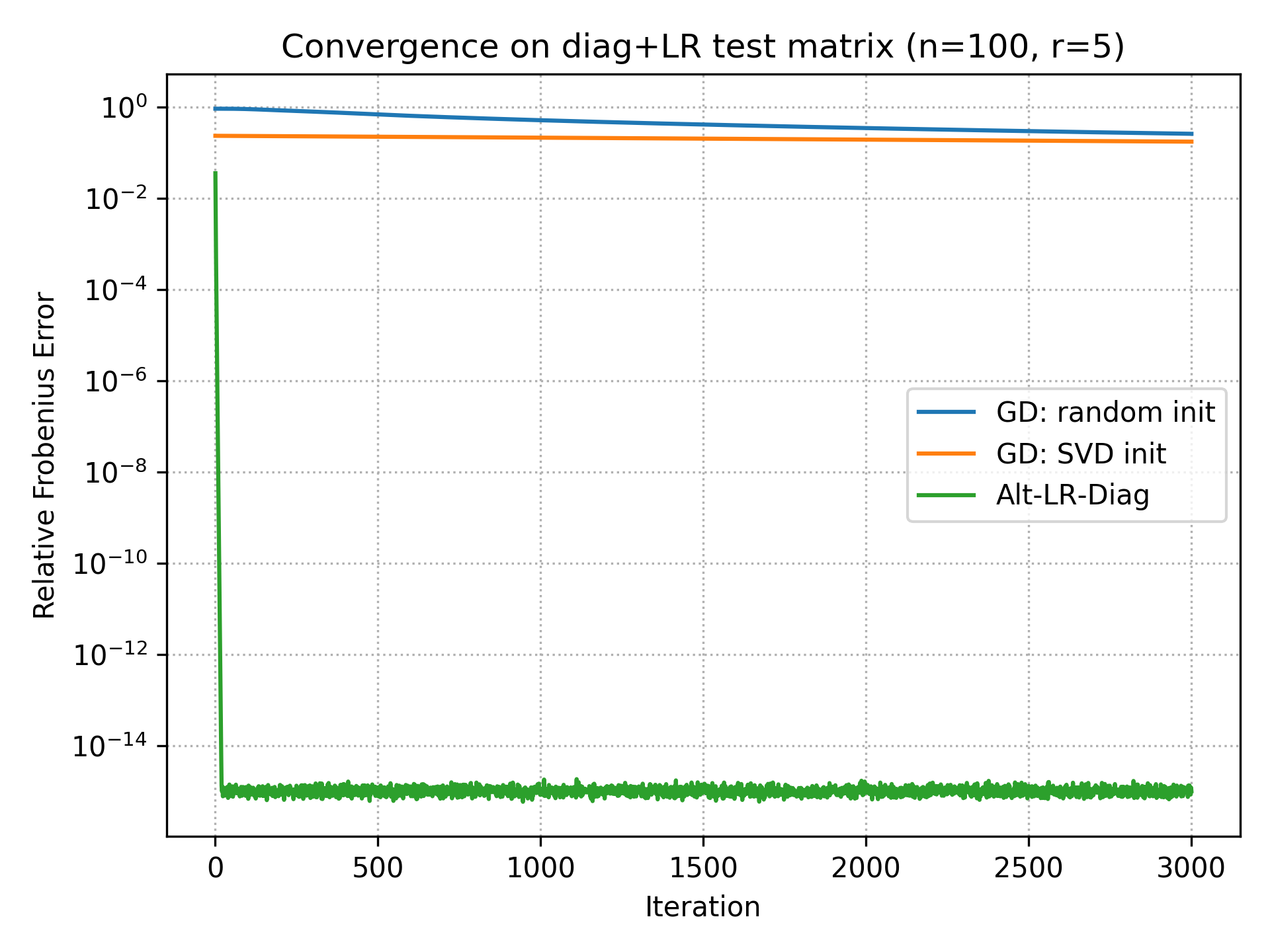}
        \caption*{(b) Alt vs. gradient descent}
    \end{minipage}
    \caption{Comparison of optimization strategies for LRPD decomposition on a synthetic matrix with diagonal plus rank-5 structure. Gradient descent uses a fixed step size $\eta = 10^{-2}$ and converges under both random and SVD initialization, whereas Alt achieves significantly better performance with simpler alternating updates.}
    \label{fig:gd_vs_altlrdiag}
\end{figure}

\section*{Acknowledgment}
 This work was supported by the U.S. Department of Energy, Office of Science (SC), Advanced Scientific Computing Research (ASCR), Competitive Portfolios Project on Energy Efficient Computing: A Holistic Methodology, under Contract DE-AC02-06CH11357. The authors thanks Cong Ma for discussions and feedback.

\bibliographystyle{siamplain}
\bibliography{refs}

\newpage
\appendix

\section{Derivation of gradients for the LRPD log-likelihood}
\label{app:gradients}

In this appendix we derive the gradients of the Gaussian negative log-likelihood
\[
f(D,U) = \log\det M + \mathrm{tr}(M^{-1}\widehat\Sigma),
\quad M(D,U)=D+UU^\top,
\]
with respect to the low-rank factor $U$ and the diagonal $d=\mathrm{diag}(D)$.

First, observe that the differential of $M$ decomposes as
\[
\mathrm{d}M = \mathrm{d}D + (\mathrm{d}U)\,U^T + U\,(\mathrm{d}U)^T.
\]
Using standard matrix differentiation identities,
\[
\mathrm{d}\log\det M = \mathrm{tr}(M^{-1}\,\mathrm{d}M),
\quad
\mathrm{d}(M^{-1}) = -M^{-1}(\mathrm{d}M)M^{-1},
\]
we compute
\begin{align*}
\mathrm{d}f
&= \mathrm{tr}(M^{-1}\,\mathrm{d}M)
+ \mathrm{tr}\bigl(-M^{-1}(\mathrm{d}M)M^{-1}\widehat\Sigma\bigr) \\
&= \mathrm{tr}\Bigl[(M^{-1}-M^{-1}\widehat\Sigma\,M^{-1})\,\mathrm{d}M\Bigr].
\end{align*}
Substituting the expression for $\mathrm{d}M$ yields
\[
\mathrm{d}f
= \mathrm{tr}\bigl[(M^{-1}-M^{-1}\widehat\Sigma\,M^{-1})
((\mathrm{d}U)U^T + U(\mathrm{d}U)^T)\bigr].
\]
By cyclicity of the trace and symmetry of the factor, this simplifies to
\[
\mathrm{d}f
= 2\,\mathrm{tr}\Bigl[(M^{-1}-M^{-1}\widehat\Sigma\,M^{-1})
\,U\,(\mathrm{d}U)^T\Bigr].
\]
Identifying $\mathrm{d}f=\mathrm{tr}((\nabla_U f)^T\mathrm{d}U)$ gives
\[
\nabla_U f = 2\,(M^{-1}-M^{-1}\widehat\Sigma\,M^{-1})\,U.
\]

For the diagonal component, only $\mathrm{d}D=\mathrm{diag}(\mathrm{d}d)$ contributes, yielding
\[
\mathrm{d}f
= \sum_{j=1}^n
\bigl[M^{-1}-M^{-1}\widehat\Sigma\,M^{-1}\bigr]_{jj}\,\mathrm{d}d_j,
\]
and hence
\[
\frac{\partial f}{\partial d_j}
= \bigl[M^{-1}-M^{-1}\widehat\Sigma\,M^{-1}\bigr]_{jj},
\quad j=1,\ldots,n.
\]

\vspace{0.1cm}
\begin{flushright}
	\scriptsize \framebox{\parbox{2.5in}{Government License: The
			submitted manuscript has been created by UChicago Argonne,
			LLC, Operator of Argonne National Laboratory (``Argonne").
			Argonne, a U.S. Department of Energy Office of Science
			laboratory, is operated under Contract
			No. DE-AC02-06CH11357.  The U.S. Government retains for
			itself, and others acting on its behalf, a paid-up
			nonexclusive, irrevocable worldwide license in said
			article to reproduce, prepare derivative works, distribute
			copies to the public, and perform publicly and display
			publicly, by or on behalf of the Government. The Department of Energy will provide public access to these results of federally sponsored research in accordance with the DOE Public Access Plan. http://energy.gov/downloads/doe-public-access-plan. }}
	\normalsize
\end{flushright}

\end{document}